\newcommand\la{\langle}
\newcommand\ra{\rangle}
\newcommand\ff{{\mathfrak f}}
\newcommand\ggo{{\mathfrak g}}
\newcommand\hh{{\mathfrak h}}
\newcommand\kk{{\mathfrak k}}
\newcommand\nn{{\mathfrak n}}
\newcommand\sso{{\mathfrak{so}}}
\newcommand\vv{{\mathfrak v}}
\newcommand\zz{{\mathfrak z}}
\newcommand\CC{\mathbb C}
\newcommand\RR{\mathbb R}
\newcommand\so{{\mathrm{so}}}
\newcommand\ad{\operatorname{ad}}
\newcommand\lf{\operatorname{F}}
\theoremstyle{plain}
\newtheorem{thm}{Theorem}[section]
\newtheorem{lem}[thm]{Lemma}
\newtheorem{prop}[thm]{Proposition}
\newtheorem{cor}[thm]{Corollary}
\theoremstyle{definition}
\newtheorem{defn}[thm]{Definition}
\newtheorem{rem}[thm]{Remark}
\newtheorem{example}[thm]{Example}
\begin{document}

\title[Symplectic structures on Low dimensional   2-step nilmanifolds]
{Symplectic structures on Low dimensional   2-step nilmanifolds}

\author{Gabriela P. Ovando, Mauro Subils}

\thanks{{\it (2000) Mathematics Subject Classification}: 53D05, 57T15, 22E25. }

\thanks{{\it Key words and phrases}: Symplectic structures,  2-step nilmanifolds, closed 2-forms.
 }

\thanks{Partially supported by  SCyT (UNR)}

\address{ Departamento de Matem\'atica, ECEN - FCEIA, Universidad Nacional de Rosario.   Pellegrini 250, 2000 Rosario, Santa Fe, Argentina.}

\

\email{gabriela@fceia.unr.edu.ar}

\email{subils@fceia.unr.edu.ar}


\begin{abstract}  The aim of this work is the study of symplectic structures  on 2-step nilmanifolds. We concentrate in the closedness condition, proving that the existence of a  closed 2-form of type II is necessary to get a  symplectic structure. In low dimensions, this condition is sufficient in most cases. 
\end{abstract}

\maketitle

 \noindent\section{Introduction}

 A 2-step nilmanifold is a smooth  manifold $M= \Lambda \backslash N$,  where $N$ is a simply connected 2-step nilpotent Lie group and $\Lambda$  is co-compact discrete subgroup of $N$. After a result of Nomizu \cite{N}    any symplectic structure on the  nilmanifold $M$ is cohomologous to an invariant one on $N$. In this work we search for symplectic structures, by concentrating the attention to the closedness condition of the corresponding 2-form. Closed 2-forms on $M=\Gamma \backslash N$ are called {\em magnetic fields}, and they are  deeply related to  magnetic trajectories (see for instance \cite{OS2} and references herein). In fact, magnetic trajectories are curves on a Riemannian manifold $(M, \la \,, \, \ra)$,  solutions of the equation \begin{equation}\label{magneteq}
 	\nabla_{\gamma'} {\gamma'} = \lf{\gamma'},
 \end{equation}
 where $\nabla$ is the Levi-Civita connection on $M$ and $\lf$ is a (1,1)- tensor giving rise to a closed 2-form $\omega=\la \lf\cdot, \cdot \ra$, which could be degenerate. This is the focus of our research.

 Symplectic 2-forms on  homogeneous spaces were studied by different authors (see for instance \cite{DM,Ch,FG,GB, Ov}, and  specifically on nilpotent Lie groups, see results for instance in \cite{BG,dB1,DT,GR}. Also a construction by a double extension procedure was given in \cite{DM}.

Since the problem of determining
whether an arbitrary Lie algebra admits symplectic structures is difficult in general, it was 
attacked by using different strategies. In a case-by-case study, after the classification  of nilpotent Lie algebras, one gets results in dimension six. See \cite{GR} for some results in dimension eight.  Several subfamilies of nilpotent
symplectic Lie algebras have been described. For example, symplectic filiform algebras \cite{Bu,GMK} or free nilpotent Lie algebras in  \cite{dB2}. 

 Pouseele and Tirao showed that for a nilpotent Lie algebra associated to a graph, the condition 
 $$2 \dim [\nn, \nn]\leq \dim \nn$$
  implies the existence of symplectic structures \cite{PT}. Moreover, the graph encodes geometrical  information. 
 Previously,  	Dotti and Tirao in \cite{DT} proved that if the 2-step nilpotent Lie group $N$ admits a  symplectic structure, then 
  $$2\dim[\nn, \nn] \leq \dim \nn +  1.$$

 As said, the focus here is concentrated on closed 2-forms.  By studying their existence, it was  proved in \cite{OS} that 
   if $\nn$ is non-singular and
    $$\dim \nn > 3 \dim [\nn, \nn]$$
    then any closed 2-form $\omega$ on $\nn$ satisfies
    $$ (*)\quad \omega(Z,U)=0, \qquad \mbox{ for all } Z\in \zz, U\in \nn.$$
    Notice that in the non-singular case, the commutator $[\nn, \nn]$ coincides with the center of the Lie algebra $\zz$.  The condition above is equivalent to saying that for any metric, the corresponding skew-symmetric map induced by $\omega$ preserves the decomposition (**), see below. 
     
     Condition (*) above is the key of our results. In term of forms, it says that there is no 2-form $\omega$ on $\nn$ such that $\omega(\zz, \zz)=0$ but $\omega (\zz, \nn)\neq 0$. This is what we call a 2-form of type II. Now, asking for  symplectic structures, we get: 
     
     \smallskip
     
     Lemma. {\em  If a 2-step nilpotent Lie group admits a symplectic structure, then it admits a closed 2-form of type II. }
     
     \smallskip
     
  This observation is the start point to study the family of closed 2-forms of type II, which is done with the help of a metric. Recall that on a 2-step nilpotent Lie group $N$ equipped with a left-invariant metric, one makes use of a natural decomposition of the corresponding Lie algebra
 	$$(**) \qquad \nn=\vv \oplus \zz, \quad \mbox{where } \vv=\zz^{\perp}. $$
 
 	Now, given a left-invariant 2-form $\omega$ on $N$, one can find a skew-symmetric map $\lf:\nn\to \nn$, such that $\omega =
 \la \lf \cdot, \cdot \ra$. We  study the existence of symplectic structures, in terms of conditions on the map $\lf$. Any such linear map $\lf$ decomposes as $\lf=\lf_1 +\lf_2$, where $\lf_1$ preserves the decomposition above, while $\lf_2$ makes $\lf(\zz)\subseteq \vv$ and $\lf(\vv)\subseteq \zz$. This corresponds to 2-forms of type I or II respectively. 
 
 The closedness condition of $\omega$ gives two closedness conditions for the 2-forms of type I and II.  Explicitly, given the constant structures of the Lie algebra $C_{ij}^k$ where $v_1, \hdots, v_n$ is an orthonormal basis of $\vv$, the closedness condition for type II reduces
 to 
 $$ \sum_{s=1}^m (C_{ij}^s b_{sk} +  C_{jk}^s b_{si} + C_{ki}^s b_{sj})=0, $$
 where $(b_{sj})$ gives a $m\times n$ matrix and $m=\dim \zz$. One searches for  non-trivial solutions. 
 	
 	Looking for the symplectic structures on low-dimensional Lie algebras, we prove
 	
 	 \smallskip
 	
  {\em Let $N$ denote a nilpotent Lie group of dimension $2n$, $n\leq 3$. With exception of the trivial extension of the Heisenberg Lie algebra of dimension five, any such Lie group admits a symplectic structure if and only if it admits a closed 2-form of type II.} 
 	
 	\smallskip
 	
 Finally we show examples in higher dimensions, where the result above is no longer true. This is provided by free 2-step nilpotent Lie algebras in $n$ generators with $n\geq 4$. In fact, these Lie algebras admit non-trivial  closed 2-forms of type II but no symplectic structures.

\section{Lie groups of step two with a left-invariant metric}\label{general}

In this section we recall basic facts on 2-step nilpotent Lie groups equipped with a left-invariant metric. Firstly recall that a  Lie group is called 2-step nilpotent if its Lie algebra is 2-step nilpotent, that is, the Lie bracket satisfies $[[U,V], W]=0$ for all $U,V,W\in \nn$. Throughout this paper Lie groups, so as their Lie algebras are considered over $\RR$.

\begin{example} \label{exa1} The smallest dimensional non-abelian 2-step nilpotent Lie group is the Heisenberg Lie group of dimension three, $H_3$. Its Lie algebra $\hh_3$  is spanned by vectors $e_1, e_2, e_3$ satisfying the non-trivial Lie bracket relation
	$$[e_1,e_2]=e_3.$$
	The Lie group $H_3$ can be modeled on $\mathbb R^3$ equipped with the product operation given by
	$$(v_1,z_1)(v_2,z_2)=(v_1+v_2, z_1+z_2+\frac{1}2 v_1^tJv_2),$$
	where $v_i=(x_i,y_i)$, i=1,2 and $J:\RR^2 \to \RR^2$ is the linear map $J(x,y)=(y, -x)$. 
Another presentation of the Heisenberg Lie group is given by $3\times3$-triangular real  matrices with 1's on the diagonal with the usual multiplication of matrices.  
	\end{example} 

A Riemannian  metric $\la\,,\,\ra$ on the Lie group  $N$ is called {\it left-invariant} if  translations on the left by elements of the group are isometries. Thus, a left-invariant metric  is determined at the corresponding Lie algebra $\nn$, usually identified with the tangent space at the identity element $T_eN$. We denote also by  $\la\,,\,\ra$ the metric on $\nn$. 

Whenever the Lie algebra $\nn$ is 2-step nilpotent, its commutator $C(\nn)$  is contained in the center $\zz$. So the metric $\la\,,\,\ra$ determines an orthogonal decomposition as vector spaces on the Lie algebra: 
\begin{equation}\label{decomp2}
	\nn=\vv \oplus \zz, \quad \mbox{ where }\quad \vv =\zz^{\perp}.
\end{equation}

The decomposition in Equation \eqref{decomp2} induces the skew-symmetric maps $j(Z):\vv \to \vv$, for every $Z\in \zz$,  implicitly defined by 
\begin{equation}\label{j}
	\la Z, [V,W]\ra =\la j(Z) V, W \ra \qquad \mbox{ for all } Z\in \zz, V, W\in \vv. 
\end{equation}

Note that $j:\zz \to \mathfrak{so}(\vv)$ is a linear map, which has a corresponding kernel. As said  the commutator $C(\nn)$ is contained in the center and  one has the splitting 
$$\zz=C(\nn)\oplus \ker(j)$$ as orthogonal direct sum of vector spaces. In fact, 
\begin{itemize}
	\item since $\la Z, [U,V]\ra=0$ for all $U,V\in \vv$ and $Z\in \ker(j)$,  then $\ker(j)\perp C(\nn)$.
	\item $\dim \zz = \dim \ker(j)+ \dim C(\nn)$. 
	\item The restriction $j:C(\nn) \quad \mapsto \quad \mathfrak{so}(\vv)\quad \mbox{is injective}.$
\end{itemize} 

See the proof of the next result in Proposition 2.7 in \cite{Eb}. 
 
 \begin{prop} \cite{Eb} Let $(N, \la\,,\,\ra)$ denote a 2-step nilpotent Lie group with a left-invariant metric. Then 
 	\begin{itemize}
 		\item the subspaces $\ker j$ and $C(\nn)$ are commuting ideals in $\nn$. 
 		\item Let $E=\exp(\ker(j))$. Then $E$ is the Euclidean de Rham factor of $N$ and $N$ is isometric to the Riemannian product of the totally geodesic submanifolds $E$ and $\bar{N}$ where $\bar{N}=\exp(\vv \oplus C(\nn))$. 
 	\end{itemize}
 	
 	\end{prop}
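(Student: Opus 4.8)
The first assertion is essentially formal and I would dispose of it quickly. The commutator $C(\nn)=[\nn,\nn]$ is an ideal in any Lie algebra, and since $\nn$ is $2$-step nilpotent it is contained in the center $\zz$; likewise $\ker j\subseteq\zz$ is central, hence also an ideal. Because $\zz$ is abelian, any two of its subspaces commute, so $[\ker j,\,C(\nn)]=0$ and the two ideals commute.

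For the second assertion the plan is to exploit the explicit Levi--Civita connection of the left-invariant metric. Applying the Koszul formula to left-invariant vector fields (so that the directional-derivative terms drop out), and using $\la [V,W],Z\ra=\la j(Z)V,W\ra$ together with the fact that $\zz$ is central, I would first record
\begin{equation*}
\nabla_V W=\tfrac12[V,W],\qquad \nabla_V Z=\nabla_Z V=-\tfrac12\, j(Z)V,\qquad \nabla_Z Z'=0,
\end{equation*}
for $V,W\in\vv$ and $Z,Z'\in\zz$. The key consequence is that if $Z\in\ker j$ then $\nabla_X Z=0$ for every left-invariant field $X$; thus the left-invariant extension of each vector of $\ker j$ is a \emph{parallel} vector field.

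Next I would package these parallel fields into a parallel distribution $\mathcal{D}_1$, the left-invariant extension of $\ker j$, whose orthogonal complement $\mathcal{D}_2$, the left-invariant extension of $\vv\oplus C(\nn)$, is then automatically parallel. From the connection formulas one checks that $\vv\oplus C(\nn)$ is a subalgebra and that $\nabla_X Y\in\vv\oplus C(\nn)$ whenever $X,Y\in\vv\oplus C(\nn)$, so both leaves are totally geodesic; moreover the leaf $E=\exp(\ker j)$ is an abelian subgroup carrying a left-invariant metric, hence flat. Since a Lie group with a left-invariant metric is complete and $N$ is simply connected, the de Rham decomposition theorem applied to the parallel splitting $TN=\mathcal{D}_1\oplus\mathcal{D}_2$ yields the isometry $N\cong E\times\bar N$ with $\bar N=\exp(\vv\oplus C(\nn))$, and the flat factor $E$ spanned by parallel fields is a Euclidean factor.

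The main obstacle is to show that $E$ is the \emph{entire} Euclidean de Rham factor, i.e.\ that $\bar N$ contributes no further flat factor. Here I would argue that $\bar N$ carries no nonzero parallel vector field: by the connection formulas, a left-invariant $\xi=V+Z\in\vv\oplus C(\nn)$ satisfies $\nabla_{V'}\xi=\tfrac12[V',V]-\tfrac12\, j(Z)V'$, whose components in $C(\nn)$ and in $\vv$ must vanish separately, forcing $V\in\zz\cap\vv=0$ and $Z\in\ker j\cap C(\nn)=0$, so that $\xi=0$. The delicate point is to pass from left-invariant parallel fields to arbitrary ones, which I would handle using homogeneity of $\bar N$: the space of parallel fields is holonomy-invariant and, by transitivity of left translations, is determined by a holonomy-fixed vector at the identity, thereby reducing the question to the left-invariant computation above. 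With no nonzero parallel field on $\bar N$, its de Rham decomposition has trivial Euclidean part, and hence $E$ is exactly the Euclidean de Rham factor.
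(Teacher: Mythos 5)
The paper does not actually prove this proposition; it cites Proposition 2.7 of Eberlein's paper, so your attempt has to be measured against that standard argument. Most of what you write is sound and is essentially the route Eberlein takes: the first bullet is indeed formal, your Koszul-formula computations $\nabla_VW=\tfrac12[V,W]$, $\nabla_VZ=\nabla_ZV=-\tfrac12 j(Z)V$, $\nabla_ZZ'=0$ are correct, the left-invariant fields coming from $\ker j$ are therefore parallel, $\vv\oplus C(\nn)$ is a subalgebra whose left-invariant distribution is the orthogonal complement of a parallel distribution and hence parallel and totally geodesic, and the de Rham theorem (which needs $N$ simply connected and complete --- both true here, though you should say so) gives the isometric splitting $N\cong E\times\bar N$ with $E$ flat.

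The genuine gap is in your last step, and you have correctly located the delicate point but not resolved it. To show $E$ is the \emph{whole} Euclidean factor you must show that no nonzero vector of $T_e\bar N$ is fixed by the holonomy group of $\bar N$, equivalently that $\bar N$ carries no nonzero parallel vector field. Your computation only rules out nonzero \emph{left-invariant} parallel fields, and the proposed reduction (``the space of parallel fields is holonomy-invariant and, by transitivity of left translations, is determined by a holonomy-fixed vector at the identity'') does not close the gap: a holonomy-fixed vector at $e$ extends to a parallel field, but there is no reason that parallel field coincides with the left-invariant extension of its value at $e$. Pushing a parallel field forward by a left translation produces \emph{another} parallel field, not the same one, so transitivity gives you nothing; indeed, on a Lie group with a flat left-invariant metric (e.g.\ the universal cover of $\En(2)$) the parallel fields are the constant fields of $\RR^3$ and are not left-invariant, so the implication you need is false in general. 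The standard way to finish is through curvature and Ambrose--Singer: a holonomy-fixed vector $v$ satisfies $R(x,y)v=0$ for all $x,y$, and the explicit curvature formulas for $2$-step groups, e.g.\ $\la R(V,W)W,V\ra=-\tfrac34\Vert[V,W]\Vert^2$ and $\la R(V,Z)Z,V\ra=\tfrac14\Vert j(Z)V\Vert^2$, show that the common kernel of all curvature operators is exactly $\ker j$, which meets $\vv\oplus C(\nn)$ trivially. Replacing your homogeneity argument by this curvature computation would make the proof complete.
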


 \begin{example}
 	Let $\hh_3$ denote the Heisenberg Lie algebra of dimension three with basis $e_1, e_2, e_3$ as in Example \ref{exa1}. The canonical  metric makes   this basis  orthonormal. It is not hard to see that the center is the subspace
 	 $\zz=span\{e_3\}$, while its orthogonal complement is  the subspace $\vv=span\{e_1, e_2\}$ and moreover the map $j:\zz\to \so(\vv)$ is generated by 
 	$$j(e_3)=\left( \begin{matrix}
 		0 & -1\\
 		1 & 0 
 	\end{matrix}\right), 
 $$
 in the basis $e_1, e_2$ of $\vv$. 
 \end{example}

Let $\Gamma\subset N$ denote a discrete subgroup of the nilpotent Lie group $N$, such that the quotient $M=\Gamma \backslash N$ is compact. This is called a {\em compact nilmanifold}. Important geometrical properties of $M$ are obtained from the Lie algebra of $N$, $\nn$. In fact, by a 
 theorem of Nomizu \cite{N} every de Rahm cohomology group $H^i(M, \RR)$ is isomorphic to the group $H^i(\nn)$. 

As a consequence, results obtained at the Lie algebra level and related to n-forms, are applicable for the corresponding Lie group and their quotients.

\begin{defn}
A  2 -step nilpotent real Lie algebra $\nn$ with center $\zz$ is called {\em non-singular}  if  $\ad(X): \mathfrak{n} \rightarrow \mathfrak{z}$ is onto for any $X \notin \mathfrak{z}$ \cite{Eb}. The corresponding 2-step nilpotent Lie group will be called non-singular. 

\end{defn}

See the next examples of non-singular Lie algebras. 

\begin{example} {\it Heisenberg Lie algebras.} \label{ExHeis} Let $n\geq 1$ be any integer and let $X_1,Y_1, X_2, Y_2,$ $ \hdots, X_n, Y_n$ be any basis of a real vector space $\vv$ isomorphic to $\RR^{2n}$. Let $Z$ be an element generating a one dimensional space $\zz$. Define a Lie bracket by $[X_i, Y_i]=-[Y_i, X_i]=Z$ and the other Lie brackets by zero. The Lie algebra $\hh_{2n+1}=\vv \oplus \zz$ is the $(2n+1)$-dimensional Heisenberg Lie algebra.   	
\end{example}

By making use of the  maps $j(Z):\vv \to \vv$ defined in Equation \eqref{j}, one may described the non-singularity notion. See \cite{Eb}. 

\begin{prop}
Let $\nn$ denote a 2-step nilpotent Lie algebra. The following properties are equivalent:
\begin{enumerate}
	\item $\nn$ is non singular, 
	\item For every inner product $\la\,,\,\ra$ and every non-zero element $Z\in \zz$ the map $j(Z)$ is non-singular. 
	\item For some inner product $\la\,,\,\ra$ and every non-zero element $Z\in \zz$ the map $j(Z)$ is non-singular. 
\end{enumerate}
\end{prop}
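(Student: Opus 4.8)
The plan is to prove the single biconditional that, for \emph{any} fixed inner product on $\nn$ with associated orthogonal decomposition $\nn=\vv\oplus\zz$, non-singularity of $\nn$ is equivalent to the non-singularity of $j(Z)$ for every nonzero $Z\in\zz$. Since the notion of non-singularity of $\nn$ makes no reference to a metric, establishing this equivalence for each inner product immediately yields all three implications at once: $(1)\Rightarrow(2)$ (the chosen metric being arbitrary), the trivial $(2)\Rightarrow(3)$, and $(3)\Rightarrow(1)$.

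First I would reduce the defining condition to the subspace $\vv$. Given $X\notin\zz$, I decompose $X=V+Z_0$ with $V\in\vv$ and $Z_0\in\zz$; then $V\neq 0$, and since $\zz$ is central we have $\ad(X)=\ad(V)$, whose image is $\{[V,W]:W\in\vv\}\subseteq C(\nn)\subseteq\zz$. Conversely, every nonzero $V\in\vv$ arises in this way. Hence $\nn$ is non-singular precisely when $\ad(V):\vv\to\zz$ is onto for every nonzero $V\in\vv$.

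Next I would rewrite the surjectivity of $\ad(V)$ using the defining relation \eqref{j}. A vector $Z\in\zz$ is orthogonal to $\Rg(\ad(V))$ if and only if $\la Z,[V,W]\ra=0$ for all $W\in\vv$, and by \eqref{j} this says $\la j(Z)V,W\ra=0$ for all $W\in\vv$, that is, $j(Z)V=0$. Therefore $\ad(V)$ is onto $\zz$ if and only if the only $Z\in\zz$ with $j(Z)V=0$ is $Z=0$. Consequently $\nn$ is non-singular if and only if there is no pair of nonzero vectors $V\in\vv$, $Z\in\zz$ with $j(Z)V=0$.

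The final, and really the only conceptual, step is to observe that this last condition is symmetric in $V$ and $Z$: the statement ``$j(Z)$ is non-singular for every nonzero $Z$'' fails exactly when some nonzero $V$ is annihilated by some $j(Z)$ with $Z\neq 0$, which is verbatim the same obstruction. Hence both sides are equivalent to the nonexistence of such a pair $(V,Z)$, and the proposition follows. I expect no serious difficulty here; the one point requiring care is the reduction involving $C(\nn)$, since when $\ker j\neq\{0\}$ the image of $\ad(V)$ is a proper subspace of $\zz$ and so $\ad(V)$ cannot be onto — but in that case there is a nonzero $Z\in\ker j$, so both sides of the equivalence fail simultaneously, which the argument accounts for automatically.
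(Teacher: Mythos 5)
Your argument is correct. Note that the paper itself gives no proof of this proposition: it simply cites Eberlein (Proposition/Lemma in \cite{Eb}), so there is nothing internal to compare against. Your proof is the standard one and is complete: the reduction of non-singularity to surjectivity of $\ad(V):\vv\to\zz$ for nonzero $V\in\vv$ (using that $\zz$ is central so $\ad(V+Z_0)=\ad(V)$), the translation via the defining relation $\la Z,[V,W]\ra=\la j(Z)V,W\ra$ of ``$Z$ orthogonal to $\Rg(\ad(V))$'' into ``$j(Z)V=0$'', and the observation that the resulting obstruction --- the existence of a pair of nonzero vectors $V\in\vv$, $Z\in\zz$ with $j(Z)V=0$ --- is symmetric in $V$ and $Z$, hence characterizes both the failure of non-singularity and the failure of invertibility of some $j(Z)$. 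Since the failure/non-failure of this condition is metric-independent in the sense that it is equivalent to the metric-free statement $(1)$ for every choice of inner product, all three equivalences follow at once, exactly as you say. Your closing remark about $\ker j\neq\{0\}$ is also handled correctly: a nonzero $Z\in\ker j$ kills every $V$, so both sides fail together.
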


Non-singular  Lie algebras are  known as {\em fat} algebras because they are the symbol algebra [Definition 3.1.7 in \cite{Ca}] or nilpotentization \cite{M} of fat distributions.

A 2-step nilpotent Lie algebra $\nn = \vv\oplus \zz$ equipped with a inner product $\la\,,\,\ra$ is {\em singular} if $j(Z)$ is singular for every nonzero $Z \in \zz$.

The Lie algebra $\nn$ is {\em almost non-singular} if $j(Z)$ is non-singular for every $Z$ in an open dense subset of $\zz$. 

Every 2-step nilpotent Lie algebra is non-singular, almost non-singular
or singular. See \cite{Eb}. 

Given a metric on a 2-step nilpotent Lie algebra $\nn$, if two nonzero elements $Z, Z'\in \zz$  can be found such that
$j(Z)$ is non-singular and $j(Z')$ is singular, then the Lie algebra $\nn$ is almost non-singular.

Among other examples, see \cite{LO,LT}, a family of non-singular Lie algebras is provided by H-type Lie algebras, which are defined as follows. 

Let $(\nn, \la\,,\,\ra)$ denote a 2-step nilpotent Lie algebra equipped with a metric. If the map $j(Z):\vv\to\vv$ is orthogonal for every $Z\in\zz$ with $\la Z, Z \ra =1$,  then the Lie algebra $\nn$ is a {\em Lie algebra  of type H  } \cite{K} (known as $H$-type Lie algebras). Equivalently, the 2-step nilpotent Lie algebra $\nn$ is of {\em type H } if and only if   $$j(Z)^2=-\la Z, Z\ra Id, \qquad \mbox{ for every }Z\in\zz, $$
which is equivalent to 
$j(Z) j(\widetilde{Z})+ j(\widetilde{Z})j(Z) =-2\la Z, \widetilde{Z}\ra Id$, for $Z, \widetilde{Z}\in \zz$. 
By making use of this identity one proves that 
$$[X,j(Z)X]=\la X, X\ra Z$$
for every $X\in\vv$ and $Z\in\zz$.

In particular, the real, complex and quaternionic Heisenberg algebras are examples of Lie algebras of type H.

\section{Left-invariant 2-forms  and magnetic fields}\label{closedforms}

In this section we study closed left-invariant 2-forms, called {\em magnetic fields},  on any 2-step nilpotent Lie group. We shall see that the closedness condition imposes several restrictions. 

  Let $\omega$ denote a left-invariant 2-form on a  Lie group $(N, \la\,,\,\ra)$ equipped with a left-invariant metric. It is {\em closed}  if 
  $$\omega([U,V],W) + \omega([V,W],U)+ \omega([W,U],V)=0\qquad \mbox{ for all }U,V,W\in \nn.$$
  
  Note that a left-invariant 2-form on $N$ is determined by its values at the identity. Thus, we say this is a 2-form on $\nn$. 
  The closed 2-form $\omega$ is {\em symplectic} whenever it is non-degenerate, that is, if $\omega(U,V)=0$ for all $V\in \nn$, then $U=0$. 
  
 Let $\la\,,\,\ra$ denote the metric on  $\nn$. Let $\omega$ be a left-invariant 2-form on $N$, then   there exists a unique skew-symmetric endomorphism $\lf:\nn \to \nn$ satisfying
$$\omega(X,Y)= \la \lf(X), Y\ra, \quad \mbox{ for all } X,Y\in \nn.$$

Conversely, let $\lf:\nn \to \nn$ denote a skew-symmetric endomorphism on the Lie algebra.  Define the associated 2-form $\omega$ as   $\omega(X,Y)= \la \lf(X),Y\ra$. 
When $\omega$ is closed, such skew-symmetric map $\lf$ is known as a {\em Lorentz force}.

Now,  the condition of asking the 2-form $\omega$ to be closed  is equivalent to ask the skew-symmetric map $\lf$ to satisfy the equation:
\begin{equation}\label{clos}	\la \lf(U), [V,W]\ra + \la \lf(V), [W,U]\ra + \la \lf(W), [U,V]\ra=0, \qquad \mbox{ for all } U,V,W\in \nn.
	\end{equation}
Furthermore, $\omega$ non-degenerate, if and only if $F$ is non-singular. 

Assume now  that the 2-step nilpotent Lie group $N$ has  Lie algebra $\nn$, which decomposes into the orthogonal splitting $\nn=\vv \oplus \zz$ as in Equation  \eqref{decomp2}. 
Write $\lf_{\vv}=\pi_{\vv}\circ \lf$ and $\lf_{\zz}=\pi_{\zz}\circ \lf$, where $\pi_{\vv}:\nn\to \vv$ and $\pi_{\zz}:\nn\to \zz$ denote  the corresponding orthogonal projections onto the subspaces $\vv$ and $\zz$, respectively.  So, the 2-form $\omega$ associated to $\lf$ is closed if and only if the following conditions hold
$$({\rm C1})\qquad \qquad \qquad 	\lf_{\zz}(Z)\in C(\nn)^{\perp}, \qquad \mbox{ for all } Z\in \zz,  
$$
and

$$({\rm C2}) \qquad 	\la \lf_{\zz}(U), [V,W]\ra + \la \lf_{\zz}(V), [W,U]\ra + \la \lf_{\zz}(W), [U,V]\ra=0, \quad \mbox{ for all } U,V,W\in \vv,
$$
which can be obtained by analyzing Condition \eqref{clos} in terms of the projections onto the subspaces $\vv$ and $\zz$.

\begin{rem}\label{ztrivial}
	For any exact form, the corresponding Lorentz map satisfies $\lf_{\zz} \equiv 0$. Furthermore,   any skew-symmetric map on the Lie algebra $\lf:\nn\to \nn$   such that the projection $\lf_{\zz}\equiv 0$ trivially satisfies Equations (C1) and (C2). 
\end{rem}

The skew-symmetric map $\lf:\nn \to \nn$  decomposes as
$$\lf=\lf_1+\lf_2,$$
where $\lf_1$ and $\lf_2$ are skew-symmetric maps such that, with respect to the orthogonal splitting  $\nn=\vv\oplus\zz$ in Equation \eqref{decomp2}, one has:
\begin{itemize}
	\item $\lf_1$ preserves the decomposition: $\lf_1(V+Z)=\lf_\vv(V) + \lf_\zz(Z)$, 
	\item $\lf_2$ interchanges the subspaces $\vv$ and $\zz$: $\lf_2(V+Z)=\lf_\zz(V) + \lf_\vv(Z)$ for all $V\in\vv$, $Z\in\zz$.
\end{itemize} 

In fact,  for any skew-symmetric map  $\lf\in \sso(\nn)$, one has the decomposition $\lf=\lf_{\zz}+\lf_{\vv}$ an so,  
$$\lf(V+Z)= (\lf_{\vv}(V)+\lf_{\zz}(Z))+(\lf_{\vv}(Z)+\lf_{\zz}(V)), \quad \mbox{ for all } V\in \vv, Z\in \zz. $$


The skew-symmetry property from $F$ implies that  both $\lf_1$ and $\lf_2$ are skew-symmetric. 

Notice that
\begin{itemize}
	\item $\lf_1$ trivially satisfies Condition (C2) and 
	\item $\lf_2$ trivially satisfies Condition (C1).
\end{itemize}

Thus the skew-symmetric map $\lf$ gives rise to a closed 2-form if and only if for both $\lf_1$ and $\lf_2$ it holds:
\begin{itemize}
	\item $\lf_1$  satisfies Condition (C1), that is $\lf_1(\zz)\subseteq \ker(j)$ and 
	\item $\lf_2$  satisfies Condition  (C2).
\end{itemize}

\begin{defn}\label{def1}
	Let $\lf$ denote a skew-symmetric map on a 2-step nilpotent Lie algebra $(\nn, \la\,,\,\ra)$. We say that
	\begin{enumerate}
		\item $\lf$ is of type I, if $\lf$ preserves the decomposition $\vv \oplus \zz$ (so, $F=F_1$)
\item $\lf$ is of type II, if $\lf(\vv)\subseteq \zz$ and $\lf(\zz)\subseteq \vv$ (so, $\lf=\lf_2$ above).
	\end{enumerate}
\end{defn}

Next, we shall study exact 2-forms. Start with a left-invariant 1-form $\eta$. Consider the linear isomorphism between the Lie algebra $\nn$ and its dual space $\nn^*$ given by the metric, that is, sending $U \to \ell_U$, where $\ell_{U}(V)=\la U, V\ra$. 

By considering the decomposition of the Lie algebra  $\nn$ given in  Equation \eqref{decomp2} one can write any left-invariant $1$-form $\eta$ as $\eta=\ell_{Z+V}$. Easily one verifies that the differential follows
$$d\ell_{Z+V}(U,V)=\la Z,[U,W]\ra=\la j(Z)U,W\ra.$$
The kernel of the differential operator $d:\nn^* \to \Lambda^2\nn^*$ contains  the subspace $\{\ell_{{V}}, \,\mbox{ with } V\in \vv\}$. For the description of  the rank of $d$ notice that  $d\eta\neq 0$ if and only if $j(Z)\neq 0$. This implies $Z$ belongs to $C(\nn)$. Thus,   for the 2-form $d\ell_{Z}$, the corresponding Lorentz force $F$ is given by $\lf=j(Z)$ for some non-trivial $Z\in C(\nn)$. 

We already proved the next result. 

\begin{prop} \label{prop1} Let $(N, \la\,,\,\ra)$ denote a 2-step-nilpotent Lie group equipped with a left-invariant metric and Lie algebra $\nn$ with orthogonal splitting $\nn=\vv\oplus \zz$ as in \eqref{decomp2}. Let $\lf:\nn \to \nn$ denote a linear map. Write the map  $\lf$  as 
	$$\lf=\lf_1+\lf_2, $$
	where $\lf_1(\vv)\subseteq \vv$ and $\lf_1(\zz)\subseteq \zz$,  while $\lf_2(\zz)\subseteq \vv$ and $\lf_2(\vv)\subseteq \zz$. Then
	\begin{enumerate}[(i)]	
		\item The skew-symmetric map $\lf$ gives rise to a closed 2-form if and only if both maps $\lf_1$ and $\lf_2$ are skew-symmetric and
			\begin{itemize}	\item $\lf_1$  satisfies \rm{Condition } \rm{(C1)} and 
		\item $\lf_2$  satisfies \rm{Condition} \rm{(C2)}.

		\end{itemize}
		\item The 2-form associated with $\lf$ is exact
		if and only if there is $Z\in C(\nn)$ such that $\lf = j(Z)$.
	\end{enumerate}
\end{prop}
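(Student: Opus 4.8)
The plan is to assemble the two equivalences from the facts already set up in the preceding discussion, since the statement is essentially a summary of that bookkeeping. For part (i), I would first fix the orthogonal block form of $\lf$ relative to $\nn=\vv\oplus\zz$, so that $\lf_1$ collects the two diagonal blocks $\vv\to\vv$ and $\zz\to\zz$, while $\lf_2$ collects the off-diagonal blocks $\vv\to\zz$ and $\zz\to\vv$. Then I would verify that skew-symmetry of $\lf$ is equivalent to skew-symmetry of both $\lf_1$ and $\lf_2$: a skew-symmetric endomorphism of an orthogonal sum has skew-symmetric diagonal blocks (this is skew-symmetry of $\lf_1$) and off-diagonal blocks that are negative transposes of each other (this is precisely skew-symmetry of $\lf_2$), and conversely the sum of two skew maps is skew.

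Next I would show that Conditions (C1) and (C2) decouple onto $\lf_1$ and $\lf_2$ respectively. Since $\lf_2(\zz)\subseteq\vv$, the component $\lf_2$ contributes nothing to $\pi_\zz\circ\lf$ evaluated on $\zz$, so $\lf_\zz|_\zz=\lf_1|_\zz$; as $\lf_\zz(Z)\in\zz$ always, Condition (C1) reduces to $\lf_1(\zz)\subseteq \zz\cap C(\nn)^{\perp}=\ker(j)$, using the orthogonal splitting $\zz=C(\nn)\oplus\ker(j)$ recorded earlier. Dually, since $\lf_1(\vv)\subseteq\vv$, the map $\lf_1$ contributes nothing to $\pi_\zz\circ\lf$ on $\vv$, so $\lf_\zz|_\vv=\lf_2|_\vv$ and Condition (C2)—which only ever evaluates $\lf_\zz$ on vectors of $\vv$—is a condition on $\lf_2$ alone. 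Because the closedness of $\omega$ is equivalent to (C1) together with (C2), and these separate cleanly onto $\lf_1$ and $\lf_2$, the stated equivalence of part (i) follows.

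For part (ii), I would use the differential computation already carried out: for a left-invariant $1$-form $\eta=\ell_{Z+V}$ one has $d\ell_{Z+V}(U,W)=\la j(Z)U,W\ra$, independent of $V$, so the Lorentz force of the exact form $d\ell_{Z+V}$ is $j(Z)$ extended by zero on $\zz$. Since $d\eta\neq 0$ exactly when $j(Z)\neq 0$, and $j$ restricted to $C(\nn)$ is injective while $j|_{\ker(j)}=0$, the image of $d\colon\nn^*\to\Lambda^2\nn^*$ is precisely $\{\,j(Z):Z\in C(\nn)\,\}$. Hence the $2$-form associated with $\lf$ is exact if and only if $\lf=j(Z)$ for some $Z\in C(\nn)$.

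I do not expect a genuine obstacle here, as the proposition merely collects results established above; the only point demanding care is the bookkeeping of the projections, namely making rigorous that $\pi_\zz$ strips Condition (C1) down to the $\lf_1$-block and Condition (C2) down to the $\lf_2$-block, and correctly identifying $\zz\cap C(\nn)^{\perp}$ with $\ker(j)$ so that the (C1) clause reads $\lf_1(\zz)\subseteq\ker(j)$.
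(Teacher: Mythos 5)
Your proposal is correct and follows essentially the same route as the paper: the paper's proof of this proposition is precisely the preceding discussion in Section 3 (the block decomposition $\lf=\lf_1+\lf_2$, the observation that skew-symmetry passes to both blocks, the fact that $\lf_1$ trivially satisfies (C2) and $\lf_2$ trivially satisfies (C1) so that closedness decouples into (C1) for $\lf_1$ and (C2) for $\lf_2$, and the computation $d\ell_{Z+V}(U,W)=\la j(Z)U,W\ra$ identifying the exact forms with $\{j(Z):Z\in C(\nn)\}$). Your extra care in identifying $\zz\cap C(\nn)^{\perp}$ with $\ker(j)$ and in noting that $j|_{C(\nn)}$ is injective while $j|_{\ker(j)}=0$ only makes explicit what the paper leaves implicit.
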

\begin{rem}\label{remtypeIandII} Note that if the 2-step nilpotent Lie group $N$ has no Euclidean factor, then $\zz=C(\nn)$. Thus,   any skew-symmetric map $\lf$ of type I satisfies $\lf_{\zz}\equiv 0$.
	
	On the other hand, if $N$ has Euclidean factor then $ker(j)\neq\{0\}$ and any non-trivial skew-symmetric map $\lf:\nn\to\nn$ that satisfies $\lf(\vv)\subseteq ker(j)$ and $\lf(\zz)\subseteq \vv$ give rise to a closed left-invariant 2-form of type II on $N$.  
\end{rem}

\begin{example} Let $V_0+Z_0$ be any element on a 2-step nilpotent Lie algebra $\nn$. Then a natural choice for $\lf$ is the skew-symmetric part of $\ad(V_0+Z_0)$: $\ad(V_0+Z_0)-\ad(V_0+Z_0)^*=\ad(V_0)-\ad(V_0)^*$, where $\ad(X)^*$ denotes the adjoint of  $\ad(X)$ with respect to the metric. Notice that $\ad(V_0)(\vv)\subseteq \zz$ and $\ad(V_0)^*(\zz)\subset \vv$. Thus $\ad(V_0)-\ad(V_0)^*$ gives rise to a closed 2-form if and only if Condition (C2) holds, equivalently:
	$$\la [V_0, V], [U,W]\ra + \la [V_0,U], [W,V]\ra + \la [V_0,W], [V, U]\ra=0\mbox{ for all } U,V,W\in \vv.$$
Which kind of elements $V_0\in \vv$ may satisfy this equation?
Take the Heisenberg Lie algebra of dimension 2n+1, $n\geq 2$, in Example \ref{ExHeis}. Assume $V_0=\sum_i x_iX_i + \sum_i y_iY_i$. By taking $U=Y_i, V=X_j$ and $W=Y_j$ with $i\neq j$, one gets $x_i=0$ for every $i$. Analogously, by taking $U=X_i$, one finally obtains $V_0=0$.  
	\end{example}
	Notice that if a Lie group $N$ admits a symplectic structure, then the dimension of $N$ is an even integer.

	\begin{lem} Let $N$ denote a 2-step nilpotent Lie group admitting a  symplectic structure. Then $N$ admits a left-invariant Lorentz force of type II.
	\end{lem}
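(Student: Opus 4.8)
The plan is to start from a symplectic form $\omega$ on $N$, take its associated Lorentz force $\lf:\nn\to\nn$ (skew-symmetric, and by non-degeneracy of $\omega$ non-singular), and decompose it as $\lf=\lf_1+\lf_2$ into its type I and type II parts. By Proposition \ref{prop1}, closedness of $\omega$ splits into Condition (C1) for $\lf_1$ and Condition (C2) for $\lf_2$. The whole statement reduces to showing that the type II part $\lf_2$ cannot vanish: once $\lf_2\neq 0$ is established, $\lf_2$ is by construction a skew-symmetric map of type II satisfying (C2), hence a non-trivial left-invariant Lorentz force of type II, which is exactly what we want.

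I would establish $\lf_2\neq 0$ by contradiction. Assume $\lf_2=0$, so that $\lf=\lf_1$ is of type I and, being equal to $\lf$, remains non-singular. Since $\lf_1$ preserves the splitting $\nn=\vv\oplus\zz$, its restriction to $\zz$ is an injective (skew-symmetric) endomorphism of $\zz$, hence bijective, so $\lf_1(\zz)=\zz$. On the other hand, Condition (C1) forces $\lf_1(\zz)\subseteq C(\nn)^{\perp}\cap\zz=\ker(j)$, using the orthogonal splitting $\zz=C(\nn)\oplus\ker(j)$ recalled in Section \ref{general}. Combining the two inclusions gives $\zz=\lf_1(\zz)\subseteq\ker(j)$; since $\ker(j)\subseteq\zz$ always holds, we conclude $\zz=\ker(j)$, that is $C(\nn)=\{0\}$.

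This contradicts the fact that $N$ is genuinely $2$-step nilpotent (so $C(\nn)\neq\{0\}$), and therefore $\lf_2\neq 0$, completing the argument. The one delicate point, and the step I expect to be the crux, is precisely the interaction between non-singularity and (C1): the contradiction hinges on a non-singular type I map being forced to act bijectively on $\zz$, so that it cannot compress $\zz$ into the proper subspace $\ker(j)$ unless $\ker(j)=\zz$. I expect no real computation; the only caveat worth flagging is the standing assumption that $N$ is non-abelian (equivalently $C(\nn)\neq\{0\}$), since in the abelian case one has $\vv=\{0\}$ and every type II map is trivially zero, so the assumption is genuinely needed.
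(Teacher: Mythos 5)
Your proposal is correct and follows essentially the same route as the paper: decompose $\lf=\lf_1+\lf_2$, assume $\lf_2=0$, and use Condition (C1) to force the non-singular type I map $\lf$ to send $\zz$ into the proper subspace $\ker(j)\subsetneq\zz$, a contradiction. The paper phrases the final step as ``the image of $\lf$ lies in $\vv\oplus\ker(j)$, so $\lf$ is singular'' rather than via bijectivity of $\lf|_{\zz}$, but this is the same argument; your remark that the non-abelian hypothesis ($C(\nn)\neq\{0\}$) is genuinely needed is a fair observation that the paper leaves implicit.
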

	
	\begin{proof} Choose a left-invariant metric on $N$ and fix this metric at the Lie algebra level. Take the decomposition $\nn=\vv \oplus\zz$ and in Equation \eqref{decomp2}. Take the linear map $\lf\in \sso(\nn)$ such that  $\omega(u,v)=\la \lf(u), v\ra$, and write $\lf=\lf_1+\lf_2$, where $\lf_1$ preserves the decomposition, but $\lf_2(\zz)\subseteq \vv$ and $\lf_2(\vv)\subseteq \zz$. 
	Assume $\lf_2$ is trivial, so $\lf=\lf_1$ is of type $I$. The closedness condition says that $\lf(\zz)\subseteq \ker(j)\subsetneq \zz$. But since $\lf(\vv)\subseteq \vv$, it follows that $\lf$ cannot be non-singular, which is a contradiction. 
		\end{proof}
		
	\begin{rem} The proof of the previous lemma shows that a Lorentz force of type I never induces a symplectic form. 
		
	Moreover, let $\nn$ be a nilpotent Lie algebra and let $\eta$ denote a $1$-form. Thus, $d\eta(Z,U)=0$ for every $Z\in\zz$, $U\in\nn$. This says that $d\eta$ cannot be sympletic. This  was already known, see for instance \cite{GR2}. 
	\end{rem}
		
\begin{example}\label{closedonh}
	Let $\hh_3$ denote the Heisenberg Lie algebra of dimension three. 	Let $e^i$, i=1,2,3 denote the dual basis of the orthonormal basis $e_1, e_2, e_3$. With the convention $e^{ij}=e^i\wedge e^j$, clearly the 2-forms $e^{12}, e^{13}, e^{23}$ give a basis of the space of 2-forms on $\nn$.  Since $\la j(e_3)e_1, e_2\ra=\la e_3, [e_1,e_2]\ra=1$, then $e^{12}$ is exact. 
	
	It is not hard to see that any 2-form $\omega=\alpha e^{12} + \beta e^{13} + \gamma e^{23}$ is closed. 
\end{example}

Notice that any skew-symmetric map of type II, namely  $\lf_2$, gives rise to a 2-form $\omega$ satisfying the condition 
$$\omega(Z,\widetilde{Z})=0, \quad \mbox{ for all } Z, \widetilde{Z}\in \zz.$$
This means, that the center is ``isotropic'' for $\omega$. In this situation only one of  the two following conditions is true:
\begin{enumerate}[(i)]
		\item either $\omega(\zz, \nn)=0$ or
	\item there is $U\in \nn-\zz$ such that $\omega(Z, U)\neq 0$ for some $Z\in \zz$. 
\end{enumerate}
Indeed the non-trivial cases correspond to the second condition (ii). Due to the correspondence between 2-forms and skew-symmetric maps, one can obtain a description of 2-forms in terms of skew-symmetric maps of type I or II, as follows. 

As above, let $\la\,,\,\ra$ denote a metric on the 2-step nilpotent Lie algebra $\nn$ and take the orthogonal decomposition $\nn=\vv \oplus \zz$ as in Equation \eqref{decomp2}. Let $F:\nn \to \nn$ be the skew-symmetric map on $\nn$ such that $\omega(V+Z, \widetilde{V}+\widetilde{Z})=\la \lf(V+Z),\widetilde{V}+\widetilde{Z}\ra$. 

The condition of the center to be isotropic says that $\la \lf(Z),\widetilde{Z}\ra=0$ for all $Z, \widetilde{Z}\in \zz$, that is $\lf_{\zz}(Z)=0$, for any $Z\in \zz$,  which in terms of the families we introduce previously, gives:
$$\lf(Z+V)=\lf_1(V)+\lf_2(Z)+\lf_2(V)\quad \mbox{ for all } V+Z\in \nn.$$
But in this situation, the corresponding 2-form $\omega$ is closed if and only if
 $\lf_2$ satisfies Condition (C2). In fact, any skew-symmetric linear map $\lf_1$ with $\lf_1(\zz)\equiv 0$ induces a  closed 2-form as already noticed  in Remark \ref{ztrivial}. 

On the other hand, $\omega(Z,U)\neq 0$ if and only if $\la \lf(Z), U\ra \neq 0$ if and only if $\la \lf_2(Z),U\ra \neq 0$ for some $Z\in \zz$ and some $U\in \nn-\zz$.  And this must occur for any metric. 

\begin{prop}\label{prop2} Let $\nn$ denote a 2-step nilpotent Lie algebra. \begin{enumerate}[(i)]
			\item There is a non-trivial closed 2-form $\omega$ for which either  $\omega(\zz, \zz)\neq 0$ or $\omega(\zz, \nn)=0$ if and only if  for any metric $\la\,,\, \ra$ on the Lie algebra $\nn$ there is a non-trivial skew-symmetric map  of type {\rm I} satisfying Condition {\rm(C1)}.
		\item There is a non-trivial closed 2-form $\omega$ for which $\omega(\zz, \zz)=0$ but $\omega(\zz, \nn)\neq 0$ if and only if for any metric $\la\,,\, \ra$ on the Lie algebra $\nn$ there is a non-trivial skew-symmetric map  of type {\rm II}  satisfying Condition {\rm (C2)}.
	
	\end{enumerate}
	\end{prop}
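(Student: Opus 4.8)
The plan is to reduce everything to Proposition \ref{prop1} by exploiting that the two conditions imposed on $\omega$ in each item are intrinsic, i.e. independent of the metric, because $\zz$ is the center of $\nn$. So I would fix an arbitrary metric $\la\,,\,\ra$, pass to the associated skew-symmetric map $\lf$ with $\omega=\la\lf\cdot,\cdot\ra$, and write $\lf=\lf_1+\lf_2$ as in Proposition \ref{prop1}. The argument then rests on two dictionary entries. First, since $\lf_2(Z)\in\vv$ is orthogonal to $\zz$, for $Z,\widetilde Z\in\zz$ one has $\omega(Z,\widetilde Z)=\la\lf_\zz(Z),\widetilde Z\ra$, so $\omega(\zz,\zz)\neq 0$ is equivalent to $\lf_1$ acting non-trivially on $\zz$. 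Second, $\omega(\zz,\nn)=0$ means $\lf(Z)=0$ for all $Z\in\zz$; splitting $\lf(Z)=\lf_\zz(Z)+\lf_\vv(Z)$ into its orthogonal $\zz$- and $\vv$-parts forces $\lf_\zz\equiv 0$ and $\lf_2|_\zz\equiv 0$, and by the skew-symmetry relation $\la\lf_2(Z),V\ra=-\la Z,\lf_2(V)\ra$ the vanishing of $\lf_2$ on $\zz$ is equivalent to $\lf_2\equiv 0$. I expect this last skew-symmetry equivalence, together with the care needed to preserve non-triviality, to be the only genuinely delicate bookkeeping.

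For item (ii) I would argue as follows. For the forward implication, given such an $\omega$ and any metric, the hypothesis $\omega(\zz,\zz)=0$ gives $\lf_\zz\equiv 0$, so $\lf(Z+V)=\lf_1(V)+\lf_2(Z)+\lf_2(V)$; closedness and Proposition \ref{prop1} then say $\lf_2$ is skew-symmetric and satisfies (C2), while $\omega(\zz,\nn)\neq 0$ forces $\lf_2\neq 0$ by the dictionary above. Hence $\lf_2$ is the desired non-trivial type II map, and since the metric was arbitrary this holds for every metric. For the converse I only need one metric: taking the type II map $\lf_2$ it provides and setting $\omega=\la\lf_2\cdot,\cdot\ra$, Proposition \ref{prop1} makes $\omega$ closed, $\lf_2(\zz)\subseteq\vv$ gives $\omega(\zz,\zz)=0$, and choosing $Z$ with $\lf_2(Z)\neq 0$ yields $\omega(Z,\lf_2(Z))=\|\lf_2(Z)\|^2\neq 0$, so $\omega(\zz,\nn)\neq 0$.

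For item (i) the logic is complementary: for a fixed non-trivial $\omega$ exactly one of the alternatives ``$\omega(\zz,\zz)\neq0$ or $\omega(\zz,\nn)=0$'' and ``$\omega(\zz,\zz)=0$ and $\omega(\zz,\nn)\neq0$'' holds, so (i) is the negation of (ii). For the forward implication, fix any metric; if $\omega(\zz,\zz)\neq 0$ then $\lf_1$ acts non-trivially on $\zz$, while if $\omega(\zz,\nn)=0$ then $\lf_2\equiv 0$ and $\lf=\lf_1=\lf_\vv\neq 0$; in both cases $\lf_1$ is a non-trivial type I map, and it satisfies (C1) by Proposition \ref{prop1}. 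For the converse, take one metric and the non-trivial type I map $\lf_1$ it provides, and set $\omega=\la\lf_1\cdot,\cdot\ra$; this $\omega$ is closed, and since $\lf_1(\zz)\subseteq\zz$ one has $\omega(\zz,\vv)=0$, whence $\omega(\zz,\nn)=\omega(\zz,\zz)$. Thus either $\lf_1|_\zz\neq 0$, giving $\omega(\zz,\zz)\neq 0$, or $\lf_1|_\zz=0$, giving $\omega(\zz,\nn)=0$; either way the required alternative holds.

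Finally, I would note that beyond the skew-symmetry equivalence flagged above the only thing to double-check is the quantifier: the forward implications genuinely produce the map for an arbitrary metric, which justifies the ``for any metric'' phrasing, while the converses consume only one metric. As a byproduct this shows that the existence of the relevant type I or type II map is itself a metric-independent property of $\nn$.
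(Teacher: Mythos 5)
Your proposal is correct and follows essentially the same route as the paper, whose ``proof'' is the discussion immediately preceding the proposition: fix a metric, pass to $\lf=\lf_1+\lf_2$, and translate $\omega(\zz,\zz)=0$ into $\lf_\zz|_\zz\equiv 0$ and $\omega(\zz,\nn)\neq 0$ into $\lf_2|_\zz\neq 0$, with closedness handled by Proposition \ref{prop1}. You are in fact somewhat more careful than the paper about the skew-symmetry argument showing $\lf_2|_\zz\equiv 0\Leftrightarrow\lf_2\equiv 0$, about item (i), and about the ``for any metric'' quantifier, all of which the paper leaves implicit.
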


 Moreover, in terms of Definition \ref{def1}, the proposition above enables to distinguish 2-forms into two families. We shall say that a  2-form $\omega$ on the Lie algebra $\nn$ is
 \begin{enumerate}[(i)]
 	\item  {\em of type} { \rm I}: if it satisfies  either  $\omega(\zz, \zz)\neq 0$ or $\omega(\zz, \nn)=0$
 	\item  {\em of type} {\rm{II}}: if it satisfies $\omega(\zz, \zz)=0$ but $\omega(\zz, \nn)\neq 0$.
 \end{enumerate}

 \begin{example} \label{hcomplex}
 	Let $\hh(\CC)$ denote the Heisenberg Lie algebra over $\CC$. Consider the underlying real Lie algebra of dimension six, that we denoted in the same way. It has a center of dimension two spanned by $Z_1,Z_2$ and complementary subspace of dimension four spanned by the vectors $X_1,Y_1,X_2, Y_2$. They satisfy the non-trivial Lie bracket relations
 	$$[X_1,Y_1]=-[X_2,Y_2]=Z_1\qquad [X_1,Y_2]=[X_2,Y_1]=Z_2.$$
 	The left multiplication by $i$ induces a real linear map $J:\hh(\CC) \to \hh(\CC)$ satisfying $J^2=-Id$ and $J\circ \ad(U)=\ad(U)\circ J$ for all $U\in \hh(\CC)$. Explicitly, in the basis, one has
 	$$J(Z_1)=Z_2 \quad J(X_1)=X_2 \quad J(Y_1)=Y_2.$$
 	Take the metric on $\hh(\CC)$ making  the set $Z_i,X_i,Y_i$ for  i=1,2, an orthonormal basis. Thus, $\hh(\CC)$ is a Lie algebra of type $H$.  Clearly the complex structure $J$ is skew-symmetric with respect to this metric $\la\,,\,\ra$.  Since this Lie algebra is non-singular, the kernel of $j$  is trivial, $\ker j=\{0\}$.
 	
 	Assume that $\lf$ is a skew-symmetric map on the Lie algebra giving rise to a closed 2-form. Write $\lf=\lf_1+\lf_2$ as above.  Since $\hh(\CC)$ is non-singular, Condition (C1) imposes  the linear map  $\lf_1$  is trivial on $\zz$. Thus, $\lf_1$ corresponds to a skew-symmetric map $\vv \to \vv$. 
 	
 	On the other hand, for   $V, W\in \vv$, the  condition (C2) for $\lf_2$ gives
 $$\begin{array}{rcl}
 	0 & = &\la \lf_2(V), [JV,W]\ra + \la \lf_2(JV), [W,V]\ra + \la \lf_2W, [V,JV]\ra\\
 &	= &\la \lf_2(V), J[V,W]\ra + \la \lf_2(JV), [W,V]\ra + \la \lf_2W, J[V,V]\ra\\
 &	= &\la \lf_2(V), J[V,W]\ra - \la \lf_2(JV), [V,W]\ra\\
 &	= & \la -J\lf_2(V)-\lf_2(JV), [V,W]\ra.
 \end{array}$$
 Since $W$ is an  arbitrary element and $\ad(V)$ is onto the center $\zz$ we get $	\lf_2(JV)=-J\lf_2(V)$,  for every $ V\in \vv$. And since $\lf_2$ is skew-symmetric it holds on $\nn$: 
 \begin{equation}\label{anticomp}
 	\lf_2 \circ J=-J\circ \lf_2.  
 \end{equation}
 Conversely,  any skew-symmetric map $\lf_2:\hh(\CC)\to \hh(\CC)$ that verifies Equation  \eqref{anticomp} will give rise to a closed 2-form of type II.
 \end{example}

 The next result proved in \cite{OS} determines a condition on the dimension of the Lie algebra and its center for the non-existence of  magnetic fields of type II, that is, the corresponding  skew-symmetric map is of type II. 
 
 \begin{lem} \label{closedF1} \cite{OS} 	Let $\nn$ denote a non-singular 2-step nilpotent Lie algebra such that $\dim \nn > 3 \dim \zz$. Then any closed 2-form on $\nn$ satisfies 
 	$$\omega(Z,U)=0, \quad \mbox{ for all } Z\in \zz, U\in \nn.$$
 \end{lem}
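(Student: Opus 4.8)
The plan is to reduce the statement to the vanishing of the type II part of the Lorentz force, and then to reach a contradiction by a rank count. First note that $\omega(Z,U)=\la \lf(Z),U\ra$, so the desired conclusion $\omega(Z,U)=0$ for all $Z\in\zz$, $U\in\nn$ is equivalent to $\lf(Z)=0$ for every $Z\in\zz$. Decomposing $\lf=\lf_1+\lf_2$ as in Proposition \ref{prop1}, the type I part is immediately controlled: since $\nn$ is non-singular we have $\ker j=\{0\}$ and $C(\nn)^{\perp}=\ker j$, so Condition (C1) forces $\lf_1(\zz)\subseteq\ker j=\{0\}$. Thus $\lf_1|_{\zz}\equiv 0$ for free, and it remains only to prove that the type II part vanishes. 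As $\lf_2$ is skew-symmetric and interchanges $\vv$ and $\zz$, it is completely determined by $\beta:=\lf_2|_{\vv}:\vv\to\zz$, so it suffices to show $\beta=0$.

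Next I would rewrite Condition (C2) as an identity for $\beta$. Using $\lf_{\zz}(U)=\beta(U)\in\zz$ for $U\in\vv$ together with the defining relation $\la Z,[V,W]\ra=\la j(Z)V,W\ra$, Condition (C2) becomes the cyclic identity
$$\la \beta U,[V,W]\ra+\la \beta V,[W,U]\ra+\la \beta W,[U,V]\ra=0,\qquad U,V,W\in\vv.$$

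The key step, which I expect to be the crux, is to convert this cyclic scalar identity into a pointwise identity between linear maps on $\vv$. Assume, for contradiction, that $\beta\neq 0$ and choose $U_0\in\vv$ with $Z_0:=\beta U_0\neq 0$. Freezing $U=U_0$ and regarding the displayed relation as an equation of linear functionals in the free variable $W$, I read off the representing vector of each term, moving $W$ into the outer slot via the skew-symmetry of the maps $j(Z)$ and via the adjoint $\beta^{*}:\zz\to\vv$. This yields
$$j(Z_0)\,V=j(\beta V)\,U_0-\beta^{*}[U_0,V],\qquad V\in\vv.$$
The subtle point is recognizing that fixing one argument collapses the three-term cyclic condition to a single vector equation, and that the two inhomogeneous terms on the right are both of low rank.

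Finally I would finish by a dimension count. On the left, $V\mapsto j(Z_0)V$ is invertible, since $Z_0\neq 0$ and $\nn$ is non-singular, so its rank equals $\dim\vv$. On the right, the map $V\mapsto j(\beta V)U_0$ factors through $\beta$ and the map $V\mapsto\beta^{*}[U_0,V]$ factors through $\beta^{*}$, so each has rank at most $\dim\zz$; hence the right-hand side has rank at most $2\dim\zz$. Comparing the two sides gives $\dim\vv\leq 2\dim\zz$, that is $\dim\nn=\dim\vv+\dim\zz\leq 3\dim\zz$, contradicting the hypothesis $\dim\nn>3\dim\zz$. Therefore $\beta=0$; skew-symmetry then gives $\lf_2=0$, so $\lf(\zz)=\{0\}$, which is exactly the assertion.
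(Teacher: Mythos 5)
Your proof is correct, and it reaches the contradiction by a genuinely different mechanism than the paper. After the common reduction to the type II part $\beta=\lf_2|_{\vv}$ (the paper kills $\omega(\zz,\zz)$ directly from closedness and $C(\nn)=\zz$, you do it via Condition (C1); both work, though your intermediate claim ``$C(\nn)^{\perp}=\ker j$'' should read $\zz\cap C(\nn)^{\perp}=\ker j$ --- harmless since $\lf_1(\zz)\subseteq\zz$), the paper argues as follows: with $\lf_2(U)=Z\neq 0$, set $\mathcal W=\ker\beta$, observe $\dim\mathcal W\geq\dim\vv-\dim\zz>\dim\vv/2$, hence $\mathcal W\cap j(Z)\mathcal W\neq\{0\}$ by non-singularity of $j(Z)$; choosing $0\neq\widetilde W=j(Z)W$ with $W,\widetilde W\in\mathcal W$ and evaluating (C2) on $(W,\widetilde W,U)$ leaves only $\la Z,[W,\widetilde W]\ra=\la\widetilde W,\widetilde W\ra\neq 0$, a contradiction. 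You instead freeze one slot of the cyclic identity to obtain the operator equation $j(Z_0)V=j(\beta V)U_0-\beta^{*}[U_0,V]$ and compare ranks: the left side is invertible on $\vv$ while each term on the right factors through a space of dimension $\dim\zz$, giving $\dim\vv\leq 2\dim\zz$. Both arguments are dimension counts hinging on the same two facts (invertibility of $j(Z_0)$ and $\operatorname{rank}\beta\leq\dim\zz$) and yield the identical bound, but yours linearizes globally and uses rank subadditivity, avoiding the choice of special test vectors, whereas the paper's exhibits an explicit pair of vectors on which closedness visibly fails; your computation of the representing vectors (signs from skew-symmetry of $j(\beta V)$ and the adjoint $\beta^{*}$) checks out.
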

\begin{proof}
  	Firstly, note that since the Lie algebra is non-singular, its commutator coincides with the center,  $C(\nn)=\zz$. Let $\omega$ denote a closed two-form on $\nn$, the non-singularity  property implies that  $\omega(\zz,\zz)=0$. In fact, let $Z,  \widetilde{Z}\in \zz$ with $Z=[U,V]$ for $U,V\in \nn$. The closedness condition says that $\omega(\widetilde{Z}, [U,V])=0$. 
 	
 	By contradiction, assume that there exists a non-trivial closed 2-form $\omega$ on the non-singular 2-step nilpotent Lie algebra $\nn$   such that there are  $Z\in \zz$ and  $U\in \nn-\zz$ satisfying  $\omega(Z,U)\neq 0$, that is $\omega$ is of type II. 	
 	
 	Let $\la\,,\,\ra$ be a metric on $\nn$ inducing a orthogonal decomposition   $\nn=\vv\oplus \zz$ as in Equation \eqref{decomp2}. Notice that by hypothesis, $\dim\vv> 2\dim \zz$. 
 	
 	Let $\lf$ denote the skew-symmetric map on the Lie algebra $\nn$ such that $\omega(X,Y)=\la \lf(X), Y\ra$. Then 
 	$$\omega(Z, U)\neq 0 \mbox{ if and only if } \la \lf(Z),U \ra \neq 0 \mbox{ if and only if }\la \lf_2(Z),U\ra\neq 0,$$
 	where $\lf_1$ and $\lf_2$ are  taken as in Proposition \ref{prop1}. Since $\la Z, \lf_2(U)\ra \neq 0$ says that the image of $\lf_2|_{\vv}$ is non-trivial,  we may assume (changing $Z$ if necessary) that $\lf_2(U)=Z$, so that $\la Z, \lf_2(U)\ra = \la Z, Z\ra\neq 0$. 
 	
 Denote the kernel of $\lf_2$ by  $\mathcal W=\ker(\lf_2|_{\vv})$. Thus one has:
 	$$\dim \mathcal W=\dim \vv - \dim Image(\lf_2|_{\vv})\geq \dim \vv - \dim \zz > \dim \vv/2$$
 	
 	Since $j(Z)$ is non-singular, it  holds $\dim j(Z)\mathcal W>\dim \vv /2$, which implies that the intersection is nontrivial,  $\mathcal W \cap j(Z) \mathcal W\neq \{0\}$. Let elements $W, \widetilde{W}\in \mathcal W$ such that $j(Z)W=\widetilde{W}\neq 0$. Now, the closedness condition for the 2-form $\omega$ is equivalent to Condition (C2) for $F_2$. And for $W, \widetilde{W}, U$ we get
 	$$0=\la \lf_2(W), [\widetilde{W}, U]\ra+\la \lf_2(\widetilde{W}),[U,W]\ra+\la \lf_2(U), [W,\widetilde{W}]\ra= \la Z, [W, \widetilde{W}]\ra = \la \widetilde{W}, \widetilde{W}\ra\neq 0,$$
 	which is a contradiction. Thus, there are no closed 2-forms of type II under the hypothesis. 
 \end{proof}

  \begin{example} {\bf A singular example.} Let $\nn$ denote the  Lie algebra $\nn=\vv \oplus \zz$, where $\vv$ is spanned by the vectors $V_1, V_2, V_3, V_4, V_5$ and $\zz$ is spanned by $Z_1, Z_2$, and they obey the non-trivial Lie bracket relations:
  $$[V_1, V_2]=Z_1, \qquad [V_3,V_4]=Z_2=[V_4,V_5].$$
  Take the metric on $\nn$ that makes of the previous basis an orthonormal basis. 
  
  Let $\lf: \nn \to \nn$ denote the skew-symmetric map given by
  $$\lf(V_3)=Z_2 =-F(V_5), \qquad \lf(V_4)=Z_1, \quad \lf(V_i)=0, i=1,2.$$
  Usual computations show that the 2-form given as $\omega(X,Y)=\la \lf(X),Y\ra$ is closed. 
  
  This is an example of a singular Lie algebra (that is, every $j(Z)$ is singular) admitting a closed 2-form of type II. In fact, every map $j(z_1Z_1 + z_2 Z_2): \vv \to \vv$ has a matrix of the form
  $$\left( \begin{matrix}
  	0 & -z_1 & 0 & 0 & 0 \\
  	z_1 & 0  &  0 & 0 & 0\\
  	0 & 0 & 0 & -z_2 & 0 \\
  	0 & 0 & z_2 & 0 &-z_2 \\
  	0 & 0 & 0  & z_2 & 0
  \end{matrix}
  \right)
  $$
  in the basis of $\vv$ given above. And $\dim \nn> 3 \dim \zz$.

  \end{example}

 In \cite{OS} the authors determine the  algebras of type H admitting Lorentz forces of type II. 
  
 \begin{thm}\label{thm2}  Let $\nn=\vv\oplus\zz$ be a Lie algebra of type H. Then $\nn$ admits  a Lorentz force of type {\rm II} if and only if $\nn$ is the $3$-dimensional Heisenberg algebra, the $6$-dimensional complex Heisenberg algebra or the $7$-dimensional quaternionic Heisenberg algebra.
 \end{thm}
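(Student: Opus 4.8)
The plan is to fix a left-invariant metric, encode a type II Lorentz force as an element of $\Lambda^3\vv^*$, extract from Condition (C2) a linear system by one Clifford contraction, and then read off the three algebras from the H-type (Clifford module) classification together with Lemma \ref{closedF1}.

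First I reformulate the problem. Since $\nn$ is of type H it is non-singular, so $\zz=C(\nn)$ and $\ker j=0$. A type II Lorentz force is a skew map $\lf_2$ with $\lf_2(\vv)\subseteq\zz$, determined by its restriction $B=\lf_2|_\vv\colon\vv\to\zz$; choosing an orthonormal basis $Z_1,\dots,Z_m$ of $\zz$ and writing $J_\beta=j(Z_\beta)$ and $\omega_\beta=\la J_\beta\cdot,\cdot\ra\in\Lambda^2\vv^*$, there are vectors $b_1,\dots,b_m\in\vv$ with $B(U)=\sum_\beta\la U,b_\beta\ra Z_\beta$. Using $\la Z,[V,W]\ra=\la j(Z)V,W\ra$, Condition (C2) is exactly the vanishing of the cyclic sum of $\la B(\cdot),[\cdot,\cdot]\ra$, that is
\[\Phi_B:=\sum_{\beta=1}^{m}\ell_{b_\beta}\wedge\omega_\beta=0\quad\text{in }\Lambda^3\vv^*,\qquad \ell_v:=\la v,\cdot\ra.\]
So I must decide, for each H-type $\nn$, whether the linear map $B\mapsto\Phi_B$ is injective.

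The crux is a contraction identity. Let $c_\gamma$ denote the operation on $3$-forms that contracts the last two slots against $\omega_\gamma$ via the metric. A direct computation using only $J_\alpha^2=-\id$, the anticommutation $J_\alpha J_\beta=-J_\beta J_\alpha$ ($\alpha\neq\beta$), and $\Tr(J_\alpha J_\beta)=-\delta_{\alpha\beta}\,n$ (with $n=\dim\vv$) gives, in a convenient normalization, the clean formula $c_\gamma(\ell_b\wedge\omega_\beta)=n\,\delta_{\beta\gamma}\,\ell_b+2\,\ell_{J_\beta J_\gamma b}$. Applying $c_\gamma$ to $\Phi_B=0$ yields the necessary system $n\,b_\gamma+2\sum_\beta J_\beta J_\gamma b_\beta=0$ for all $\gamma$; multiplying the $\gamma$-th equation on the left by $J_\gamma$ and using $J_\gamma J_\beta J_\gamma=J_\beta$ ($\beta\neq\gamma$) collapses it to $(n-4)\,J_\gamma b_\gamma+2S=0$, where $S=\sum_\beta J_\beta b_\beta$. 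Hence if $n\neq4$ all the vectors $J_\gamma b_\gamma$ coincide, and summing over $\gamma$ gives $(n-4+2m)\,S=0$. Therefore, as soon as $n\neq4$ and $n+2m\neq4$, the only solution is $b_1=\dots=b_m=0$, so such an H-type algebra admits no nonzero type II form.

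It remains to identify the exceptions. H-type algebras are exactly the Clifford modules over $\zz\cong\RR^m$, and being non-singular they fall under Lemma \ref{closedF1}: a type II form can exist only when $\dim\vv\le2m$. Since $\dim\vv$ is a multiple of the Radon--Hurwitz dimension $d_m$ (with $d_1,\dots,d_8=2,4,4,8,8,8,8,16$ and $d_{m+8}=16\,d_m$), this bound leaves only $m\le8$ with $\vv$ irreducible, i.e.\ $(m,n)\in\{(1,2),(2,4),(3,4),(4,8),(5,8),(6,8),(7,8),(8,16)\}$. For the five cases with $m\ge4$ one has $n\neq4$ and $n+2m\neq4$, so the previous paragraph excludes them. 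The three remaining cases are precisely the exceptional values of the computation: $n+2m=4$ for $(m,n)=(1,2)$, which is $\hh_3$, and $n=4$ for $(2,4)$ and $(3,4)$, the complex and quaternionic Heisenberg algebras. For these, existence is immediate by a dimension count: $\dim\Lambda^3\vv^*=\binom n3$ equals $0,4,4$ respectively, while $\dim\operatorname{Hom}(\vv,\zz)=nm$ equals $2,8,12$, so $B\mapsto\Phi_B$ has nontrivial kernel. (For $\hh_3$ every $B$ works, recovering Example \ref{closedonh}, and the case $(2,4)$ recovers the anticommutation condition of Example \ref{hcomplex}.)

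The main obstacle is the contraction identity: obtaining the exact coefficients $n$ on the diagonal and $2J_\beta J_\gamma$ off it, which is what makes the $J_\gamma$-multiplication collapse the whole system to the single relation $(n-4)J_\gamma b_\gamma+2S=0$ and thereby isolates the critical values $n=4$ and $n+2m=4$. A secondary point needing care is the classification input, in particular that for $m=3$ the two inequivalent $\mathrm{Cl}_{0,3}$-modules both yield the quaternionic Heisenberg algebra.
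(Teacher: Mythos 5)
Your argument is correct, and in fact the paper contains no proof to compare it against: Theorem \ref{thm2} is quoted from the reference [OS] (``In \cite{OS} the authors determine the algebras of type H admitting Lorentz forces of type II''), so your write-up supplies a self-contained proof where the paper only cites one. The route you take is coherent and the computations check out: the reformulation of Condition (C2) as $\sum_\beta \ell_{b_\beta}\wedge\omega_\beta=0$ in $\Lambda^3\vv^*$ is exactly the cyclic sum, the contraction identity $c_\gamma(\ell_b\wedge\omega_\beta)=n\,\delta_{\beta\gamma}\ell_b+2\,\ell_{J_\beta J_\gamma b}$ follows from $\Tr(J_\gamma^t J_\beta)=n\delta_{\beta\gamma}$ and skew-symmetry, and the collapse to $(n-4)J_\gamma b_\gamma+2S=0$ via $J_\gamma J_\beta J_\gamma=J_\beta$ ($\beta\neq\gamma$) is right, isolating precisely $n=4$ and $n+2m=4$. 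Combined with Lemma \ref{closedF1} (which forces $\dim\vv\leq 2\dim\zz$) and the Radon--Hurwitz dimensions this correctly reduces the classification to the eight pairs you list, kills the five with $m\geq 4$, and the dimension count $nm>\binom{n}{3}$ settles existence for $\hh_3$, $\hh(\CC)$, $\hh(\HH)$, consistent with Example \ref{closedonh}, Example \ref{hcomplex} and the table in Section 4. Two points worth making explicit if this is written up: (i) the contraction $c_\gamma(\Phi_B)=0$ is only a necessary consequence of $\Phi_B=0$, which is fine for the non-existence direction but should be flagged; (ii) the identification of the surviving pairs $(m,n)$ with the three named algebras uses that each pair supports a unique H-type algebra up to isomorphism (for $m=3$ the two inequivalent irreducible Clifford modules give isomorphic Lie algebras), which you note but should justify or reference. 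What your approach buys is uniformity: the Clifford relations alone dispose of all large cases at once, rather than a case-by-case analysis of the classified H-type algebras.
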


The results of this section give obstructions for the existence of left-invariant symplectic structures on 2-step nilpotent Lie groups. This follows as an application. 

\begin{cor}
Let $N$ denote a 2-step nilpotent Lie group of dimension $2n$. Whenever the Lie group  $N$ is non-singular and $\dim \nn > 3 \dim \zz$,  there is no symplectic structure on $N$. 

In particular, the only Lie group of type H admitting a left-invariant symplectic structure corresponds to the complex Heisenberg with Lie algebra $\hh(\CC)$.   
\end{cor}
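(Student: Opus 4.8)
The plan is to derive both assertions from the structural lemma proved above---that a symplectic structure forces a Lorentz force of type {\rm II}---combined with the two non-existence results for type {\rm II} data, namely Lemma \ref{closedF1} and Theorem \ref{thm2}. In each case I argue by contraposition: a symplectic form always produces a non-trivial type {\rm II} datum, whereas the hypotheses of the corollary are precisely what forbid such a datum.

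For the first assertion I would suppose, toward a contradiction, that the non-singular group $N$ with $\dim \nn > 3\dim\zz$ carries a symplectic structure. The lemma stating that a symplectic structure yields a Lorentz force of type {\rm II} then provides, after fixing any left-invariant metric, a non-trivial closed 2-form $\omega$ of type {\rm II}; by Proposition \ref{prop2}(ii) this means $\omega(\zz,\zz)=0$ but $\omega(\zz,\nn)\neq 0$, so $\omega(Z,U)\neq 0$ for some $Z\in\zz$ and $U\in\nn$. This contradicts Lemma \ref{closedF1}, which under exactly these hypotheses forces $\omega(Z,U)=0$ for all $Z\in\zz$, $U\in\nn$. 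Hence no symplectic structure can exist, which is the claim.

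For the second assertion I would first cut the list of candidates down to one. A symplectic form requires even dimension, and by the same lemma any type H algebra admitting a symplectic structure must carry a Lorentz force of type {\rm II}. Theorem \ref{thm2} confines the algebras carrying such a force to the three Heisenberg-type algebras listed there, whose dimensions are $3$, $6$ and $7$; only the complex Heisenberg algebra $\hh(\CC)$ is even-dimensional, so it is the unique possible candidate.

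It then remains to certify that $\hh(\CC)$ genuinely admits a symplectic structure, and this is the step I expect to carry the real content. Working in the basis of Example \ref{hcomplex} with dual coframe $x^i,y^i,z^i$, I would exhibit an explicit closed, non-degenerate 2-form, for instance $\omega = y^1\wedge y^2 + x^1\wedge z^1 - x^2\wedge z^2$. Closedness of the type {\rm II} summand $x^1\wedge z^1 - x^2\wedge z^2$ is exactly the anticommutation condition $\lf_2\circ J=-J\circ \lf_2$ singled out in Example \ref{hcomplex}; the purely horizontal term $y^1\wedge y^2$ is automatically closed; and computing $\omega^3$ (only the cross term $3\,(y^1\wedge y^2)\wedge(x^1\wedge z^1-x^2\wedge z^2)^2$ survives) shows it is a nonzero multiple of the volume form, giving non-degeneracy. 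The delicate point---the main obstacle---is the choice of the horizontal type {\rm I} summand: the type {\rm II} part alone can never be invertible, since it interchanges the four-dimensional $\vv$ with the two-dimensional $\zz$, so one must add a type {\rm I} term supported on the directions killed by $\lf_2$, and verifying $\omega^3\neq 0$ is precisely the check that this restores bijectivity.
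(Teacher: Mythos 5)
Your argument is correct and follows essentially the same route as the paper: the first assertion combines the lemma that a symplectic form forces a non-trivial type II Lorentz force with Lemma \ref{closedF1}, and the second reduces to $\hh(\CC)$ via Theorem \ref{thm2} and parity of the dimension. Your explicit form $\omega = y^1\wedge y^2 + x^1\wedge z^1 - x^2\wedge z^2$ is a valid closed non-degenerate choice (it fits the general matrix in Example \ref{hcomplex} with $f=g=1$ and all other entries zero), and it makes concrete the step where the paper only asserts that ``there are non-singular examples.''
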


\begin{proof} By Lemma \ref{closedF1} any non-singular Lie algebra $\nn$ satisfying $\dim \nn > \dim \zz$ cannot admit a closed-symplectic structure of type II. Thus any closed 2-form corresponds to a 2-form of type I. Such  2-form corresponds to a skew-symmetric map on $\nn$ satisfying Condition (C1), that is $\lf_{\zz}(Z)\in \ker{j}$ for all $Z\in \zz$. Thus by the non-singularity property $\lf_{\zz}(Z)=0$ for all $Z\in\zz$. So,  any closed 2-form of type  I is degenerate. 

  Among Lie algebras of type H admitting closed 2-form of type II, only the complex Heisenberg Lie algebra $\hh(\CC)$ is even-dimensional. According to computations in Example \eqref{hcomplex}, any Lorentz force on $\hh(\CC)$ has a matrix presentation of the form in the basis $X_1, X_2, Y_1, Y_2, Z_1, Z_2$:
  $$\left(\begin{matrix}
  	0 & a & b & c & g & h\\
  	-a & 0 & d &  e & h & -g\\
  	-b & -d & 0 & f & i & j\\
  	-c & -e & -f & 0 & j & -i\\
  	-g & -h & - i & - j & 0 & 0\\
  	-h & g & -j & i & 0 & 0 
  \end{matrix}
  \right) 
  $$
  and it is not difficult to see that there are non-singular examples. Thus, there exist symplectic structures in this case. 
\end{proof}

\begin{rem} Dotti and Tirao proved in \cite{DT}:
	
	\smallskip 
	
	{\em 	Let $M = T^t \times \Gamma \backslash H$ be an even dimensional $H$-type nilmanifold,
	where $H$ is an $H$-type group whose Lie algebra is not isomorphic to $\hh_3$, $\hh(\CC)$ or $\hh(\mathbb H)$. Then there
	is no symplectic structure on $M$.}	

By Remark \ref{remtypeIandII}, if $t\geq1$ we get examples of Lie groups that do not admit symplectic structures but  admitting  a non-trivial closed left-invariant 2-form of type II. In \ref{sec.example} we show more interesting examples on  indecomposable Lie algebras.
\end{rem}

 \section{Closed 2-forms on low dimensional Lie algebras}
 
 In this section we study closed 2-forms of type II on 2-step nilpotent Lie algebras of dimension $n$ with $n\leq 6$. The main goal is to compare the existence of closed 2-forms of type II with the existence of symplectic structures. 
 

  Notice that the non-existence  of closed 2-forms of type II on the Heisenberg Lie algebra of dimension $5$ is a consequence of Lemma \ref{closedF1}, while the existence for non-singular Lie algebras such as the Heisenberg Lie algebra of dimension 3, and the complex Heisenberg Lie algebra in dimension six, is stated in Theorem \ref{thm2}. 
  
  Firstly, recall the list of 2-step nilpotent Lie algebras, where we write the non-trivial Lie brackets:
  
  Dimension 3	
  	
  \begin{enumerate}
  		
  \item $\hh_1(\RR)$ Heisenberg Lie algebra of dimension three: $[e_1, e_2]=e_3$.

  \end{enumerate}	
  
  Dimension 4

  \begin{enumerate}
  	
  	\item $\hh_1(\RR)\oplus\RR=span\{e_1, e_2, e_3, e_4\}$
  with $[e_1, e_2]=e_3$.	
  \end{enumerate}		
  	
  Dimension 5
  
  \begin{enumerate}
  
  	\item $\hh_1(\RR)\oplus\RR^2=span\{e_1, e_2, e_3, e_4, e_5\}$
  	  with $[e_1, e_2]=e_3$.
  	
  	\item $\hh_2(\RR)$ Heisenberg Lie algebra of dimension $5$: $span\{e_1, e_2, e_3, e_4, e_5\}$
  	  with $[e_1, e_2]=[e_3,e_4]=e_5$.  
  	
  	\item $\ggo_5=span\{e_1, e_2, e_3, e_4, e_5\}$ with  $[e_1,e_3]=e_4$,  $[e_2,e_3]=e_5$, called    Star. 
  \end{enumerate}		
  
  Dimension 6: the Lie algebra is spanned by vectors $e_1, e_2, e_3, e_4, e_5, e_6$

  \begin{enumerate}
    	\item $\hh_1(\RR)\oplus\RR^3$ with $[e_1, e_2]=e_3$. 
  	
  	\item $\hh_2(\RR)\oplus\RR$ with $[e_1, e_2]=[e_3,e_4]=e_5$.
  	
  	\item $\ggo_5\oplus\RR$  with  $[e_1,e_3]=e_4$,  $[e_2,e_3]=e_5$.
  	
  	\item $\hh_1(\RR)\oplus\hh_1(\RR)$ with $[e_1,e_2]=e_5$, $[e_3, e_4]=e_6$. 
  	
  	\item $\ff_6$: where $[e_1,e_2]=e_4$  $[e_1,e_3]=e_5$ $[e_2,e_3]=e_6$. 
  	 This is the free 2-step nilpotent Lie algebra in three generators. 
  	   	
  	\item $\kk_6$: where $[e_1,e_4]=e_5$  $[e_2,e_3]=-e_5$ $[e_3,e_4]=e_6$. 
  	
  	\item $\hh_1(\CC)$ with $[e_1, e_2]=e_5=-[e_3, e_4]$, $[e_1, e_4]=e_6=[e_2, e_3]$. It is  the complex Heisenberg Lie algebra. 
   	
  \end{enumerate}

  	   \begin{prop} Every 2-step nilpotent Lie group of dimension $n$ with $n\leq 6$ admits a non-trivial closed 2-form of type II, with exception of the real Heisenberg Lie algebra of dimension $5$. 
  	     \end{prop}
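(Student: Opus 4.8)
The plan is to run through the finite list of twelve $2$-step nilpotent Lie algebras of dimension at most $6$ and sort them into three groups, treating each group by a uniform device so that only a handful of explicit checks remain.

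\textbf{Group 1: algebras with a Euclidean factor.} These are exactly the decomposable ones carrying a nontrivial abelian summand, namely $\hh_1(\RR)\oplus\RR$, $\hh_1(\RR)\oplus\RR^2$, $\hh_1(\RR)\oplus\RR^3$, $\hh_2(\RR)\oplus\RR$ and $\ggo_5\oplus\RR$. For each of them $\ker(j)\neq\{0\}$, so Remark \ref{remtypeIandII} applies verbatim: choosing any $v\in\vv$ and a nonzero $w\in\ker(j)$ and letting $\lf$ send $v\mapsto w$, $w\mapsto -v$ and everything else to $0$ produces a skew-symmetric map with $\lf(\vv)\subseteq\ker(j)$ and $\lf(\zz)\subseteq\vv$. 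Since $[\vv,\vv]\subseteq C(\nn)\perp\ker(j)$, Condition (C2) holds automatically, and the resulting $\omega$ is a nontrivial $2$-form of type \rm{II}. No computation is needed here.

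\textbf{Group 2: the non-singular algebras $\hh_1(\RR)$ and $\hh(\CC)$.} Both appear explicitly in the list of Theorem \ref{thm2} (the $3$-dimensional real and the $6$-dimensional complex Heisenberg algebras), which already guarantees a Lorentz force of type \rm{II}; so these cases are quoted directly.

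\textbf{Group 3: the remaining algebras $\ggo_5$, $\hh_1(\RR)\oplus\hh_1(\RR)$, $\ff_6$ and $\kk_6$.} These satisfy $\zz=C(\nn)$ and are not covered above, so I would construct $\lf_2$ by hand. Fix the metric making the listed basis orthonormal; a type \rm{II} map is determined by its restriction $\lf_2|_\vv:\vv\to\zz$, the block $\zz\to\vv$ being forced by skew-symmetry. To impose Condition (C2) I would use that the cyclic expression in (C2) vanishes automatically whenever two of the three arguments coincide, so by trilinearity it suffices to test the finitely many triples of \emph{distinct} basis vectors of $\vv$: only the single triple $(e_1,e_2,e_3)$ for $\ggo_5$ and for $\ff_6$, and the four triples drawn from $\{e_1,e_2,e_3,e_4\}$ for $\hh_1(\RR)\oplus\hh_1(\RR)$ and $\kk_6$. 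Each test is one scalar equation in the entries of $\lf_2|_\vv$, and the resulting homogeneous linear system visibly has nontrivial solutions; I would record one rank-one choice in each case (e.g. $\lf_2(e_3)=e_4$, $\lf_2(e_4)=-e_3$ on $\ggo_5$, and a single nonzero off-diagonal entry on the others), and then verify $\omega(\zz,\nn)\neq0$ to confirm the form is genuinely of type \rm{II} rather than type \rm{I}.

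Finally, the lone exclusion is accounted for abstractly: $\hh_2(\RR)$ is non-singular with $\dim\hh_2(\RR)=5>3=3\dim\zz$, so Lemma \ref{closedF1} rules out any closed $2$-form of type \rm{II}. The only real work lies in Group 3, and the sole pitfall there is sign- and bracket-bookkeeping inside (C2); reducing the verification to the few distinct-index triples is what keeps it routine.
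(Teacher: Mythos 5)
Your proposal is correct, and at its core it follows the same route as the paper: a case-by-case pass through the classification, with $\hh_2(\RR)$ excluded by Lemma \ref{closedF1} and the non-singular cases $\hh_1(\RR)$, $\hh(\CC)$ quoted from Theorem \ref{thm2} — exactly the reductions the paper records in the paragraph preceding the proposition. Where you genuinely diverge is in the remaining cases. The paper sets up the linear system $\sum_{s}(C_{ij}^s b_{sk}+C_{jk}^s b_{si}+C_{ki}^s b_{sj})=0$ uniformly for all twelve algebras and tabulates the \emph{full} solution space of closed type-II forms for each; you instead dispatch the five algebras with a Euclidean factor structurally via Remark \ref{remtypeIandII} (any type-II map with $\lf(\vv)\subseteq\ker(j)$ kills every term of (C2) because $\ker(j)\perp C(\nn)$), which leaves only $\ggo_5$, $\hh_1(\RR)\oplus\hh_1(\RR)$, $\ff_6$ and $\kk_6$ to be treated by hand, and there you exhibit a single nontrivial solution apiece. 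Your grouping is exhaustive (the four Group 3 algebras are precisely those with $\zz=C(\nn)$ not covered by Theorem \ref{thm2}), your reduction of (C2) to triples of distinct basis vectors is justified since the cyclic expression is alternating in $(U,V,W)$, and your sample solutions check out. The trade-off is that the paper's table is not merely a proof device: the complete solution spaces it records are reused in the next part of the section to decide which of these algebras admit symplectic forms coming from type-II data, so your leaner existence argument would still leave that computation to be done separately.
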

  	
  	\begin{proof}
  	For the general approach to the proof, let $\nn$ denote a 2-step nilpotent Lie algebra equipped with any metric. Then the Lie algebra admits a decomposition $\nn=\vv \oplus \zz$ as in \ref{decomp2}. 
  	
  	Let $v_1, \hdots v_m$ be a orthonormal basis of $\vv$ and $z_1, \hdots z_n$ a orthonormal basis of $\zz$. As usual, write $C_{ij}^k$ for the constant structures of the Lie algebra, which in our cases is
  	$$[v_i, v_j]=\sum_{s=1}^n C_{ij}^s z_s.$$
  	
  	Now, let $\omega$ denote a closed 2-form of type II, which satisfy $\omega = \la F \cdot, \cdot \ra$. Thus $\lf(v_k)=\sum_{t=1}^n b_{tk} z_t$ for any $k=1, \hdots m$. Notice that $b_{sk}=-b_{ks}$ from the skew-symmetric property of $F$. Now, the closedness condition says 
  	$$\la [v_i, v_j], \lf(v_k)\ra + \la [v_j, v_k], \lf(v_i)\ra + \la [v_k, v_i], \lf(v_j)\ra, \quad \mbox{ for all }i,j,k, $$ 
  	which explicitly gives
  	 $$ \sum_{s=1}^n (C_{ij}^s b_{sk} +  C_{jk}^s b_{si} + C_{ki}^s b_{sj})=0.$$
  	 Indeed trivial solutions $b_{si}=0$ for all $i,\,s$ are always possible. 
  	 
  	 The next table shows the solutions of the equation above in every case. 
  	 
  	 We denote by $e^i$, the dual forms of the elements of the $e_j$ of the corresponding Lie algebras. And as usual $e^{ij}:= e^i \wedge e^j$.
  	 
  	 \begin{table}[htb]
  	 \begin{tabular}{|c|rc|}
  	\hline
  	Lie algebra & closed 2-forms & \\ \hline
  	$\hh_1$ & $ a_{13} e^{13}+ a_{23} e^{23},$ & $a_{i3}\in \RR$, for $i=1,2$. \\ \hline
  		$\hh_1\oplus \RR$ & $ a_{13} e^{13}+ a_{23} e^{23}+ a_{14} e^{14}+ a_{24} e^{24},$ & $a_{ij}\in \RR$. \\ \hline
  $\hh_1\oplus \RR^2$ & $ \sum_{j=3}^5a_{1j} e^{1j}+ \sum_{j=3}^5a_{2j} e^{2j},$ & $a_{ij}\in \RR$. \\ \hline
  $\hh_2$ & no & \\ \hline
  $\ggo_5$ &  $\sum_{i=1}^3\sum_{j=4}^5a_{ij} e^{ij},$ & $a_{ij}\in \RR$. \\ \hline
   $\hh_1\oplus \RR^3$ & $ \sum_{j=3}^6a_{1j} e^{1j}+ \sum_{j=3}^6a_{2j} e^{2j},$ & $a_{ij}\in \RR$. \\ \hline	
   $\hh_2\oplus \RR$ & 	$\sum_{i=1}^4a_{i6j} e^{i6},$ & $a_{i6}\in\RR$. \\ \hline
   $\ggo_5 \oplus \RR$ & $\sum_{i=1}^3\sum_{j=4}^6a_{ij} e^{ij},$ & $a_{ij}\in \RR$. \\ \hline
    $\hh_1\oplus \hh_1$ & $ \sum_{i=1}^2a_{i5} e^{i5}+ \sum_{j=3}^4a_{i6} e^{i6},$ & $a_{ij}\in \RR$. \\ \hline	
    $\mathfrak f_6$ & $\sum_{i=1}^3 \sum_{j=4}^6a_{ij} e^{ij}, a_{25}=a_{16}+a_{34},$ & $a_{ij}\in \RR$. \\ \hline
    $\mathfrak k_6$ & $a (e^{16}+e^{35})+ b (e^{26}+e^{45})+\sum_{i=3}^4a_{i6} e^{i6}, $ & $a_{ij}\in \RR$. \\ \hline	
    $\hh(\CC)$ & $\sum_{\substack{i=1,3}}a_{i5} (e^{i5}-e^{i+1 6})+ \sum_{\substack{i=1,3}}a_{i6} (e^{i6}+e^{i+1 5}), $ & $a_{ij}\in \RR$. \\ \hline
  		  	 \end{tabular}
  		  	 \caption{Closed 2-forms of type II}
  \end{table}	 
  	   	\end{proof}
  	   	
Let us now determine the 2-closed forms on
each of the nilpotent Lie algebras of dimension $n\leq 6$ listed above. They are obtained from canonical computations by asking the 2-form to be closed (see \eqref{clos}). 
 
 Recall that $\nn =\vv \oplus \zz$ with $\vv=\zz^{\perp}$. And any skew-symmetric map $\lf$ of type I preserves the decomposition, while if $\lf$ is of type II, then $\lf(\zz)\subseteq \vv$ and $\lf(\vv)\subseteq \zz$. 
 
 \smallskip
 
   Dimension 3: In $\hh_1(\RR)$ every 2-form is closed and it is represented by any skew-symmetric map $F:\hh_1(\RR)\to \hh_1(\RR)$. Thus in the canonical basis, it has a matricial presentation
   $$\left( \begin{matrix}
   A & B\\
   -B^t & 0 
   \end{matrix} \right) \quad A\in \mathfrak{so}(2,\RR), \quad  B^t=\left( \begin{matrix} b & c  \end{matrix} \right).
   $$

   Dimension 4: In $\hh_1(\RR)\oplus \RR$ every closed 2-form is represented by a matrix of the form:
       $$\left( \begin{matrix}
      A & B \\
     -B^t & 0 
      \end{matrix}  \right)\quad A\in\mathfrak{so}(2, \RR), \quad  B^t=\left( \begin{matrix} b & c \\ d & e  \end{matrix} \right). 
      $$
      Notice that there are symplectic structure, whenever the matrix above is non-singular. 
      
      Dimension 5: 
      
       $\bullet$ In $\hh_1(\RR)\oplus \RR^2$   every closed 2-form is represented by a matrix of the form:
       $$\left( \begin{matrix}
           A & B\\
           -B^t & C
            \end{matrix}  \right)\quad A\in \mathfrak{so}(2
            ,\RR), \quad B^t \in M_{2\times 3}(\RR) \quad C = \left( \begin{matrix} 
                        0 & 0 & 0 \\
                        0 & 0 & h \\
                        0 & -h & 0
                        \end{matrix}\right). 
            $$
            
                   $\bullet$ In $\hh_2(\RR)$   every closed 2-form is represented by a matrix of the form:
             $$\left( \begin{matrix}
                 A & 0 \\
                 0 & 0
                  \end{matrix}  \right)\quad A\in \mathfrak{so}(4,\RR). 
                  $$
                  
              $\bullet$    In $\ggo_5$ every closed 2-form is represented by a matrix of the form:
              $$\left( \begin{matrix}
              A & B \\
              -B^t & 0
              \end{matrix} \right) \quad A\in \mathfrak{so}(3,\RR),\quad  B^t\in M_{2\times 3}(\RR).$$

            Dimension $6$
            
             $\bullet$ In $\hh_1(\RR)\oplus\RR^3$ every closed 2-form is represented by a matrix of the form:
             $$\left( \begin{matrix}
             A & B \\
             -B^t & C
             \end{matrix} \right) \quad A\in \mathfrak{so}(2,\RR),\quad  B\in M_{2\times 4}(\RR), \quad   C =\left( \begin{matrix}
             0 & 0 & 0 & 0 \\
             0 & 0 & -e & -f \\
             0 & e & 0 & -g\\
             0 & f & g & 0
             \end{matrix} \right).$$
            
            The 2-form $\omega=e^1\wedge e^3 + e^2\wedge e^4 + e^5 \wedge e^6$ gives a symplectic structure. 
            
            $\bullet$ In $\hh_2(\RR)\oplus\RR$ every closed 2-form is represented by a matrix of the form:
            $$\left( \begin{matrix}
            A & B \\
            -B^t & 0
            \end{matrix} \right) \quad A\in \mathfrak{so}(4,\RR),\quad  B^t=\left( \begin{matrix}
            0 & 0 & 0 & 0 \\
            h & g & j & i
            \end{matrix} \right).$$
            There is no symplectic structure. 
            
            $\bullet$  In $\ggo_5\oplus\RR$ every closed 2-form is represented by a matrix of the form:
            $$\left( \begin{matrix}
            A & B \\
            -B^t & 0
            \end{matrix} \right) \quad A\in \mathfrak{so}(3,\RR),\quad  B\in M_{3\times 3}(\RR).$$
            
            The 2-form $\omega=e^1\wedge e^4 +e^2 \wedge e^5 + e^3 \wedge e^6$ gives a symplectic structure.

 $\bullet$ In $\hh_1(\RR) \oplus \hh_1(\RR)$ every closed 2-form is represented by a $6 \times 6$ matrix of the form:
   $$\left( \begin{matrix}
    A & B \\
    -B^t & 0
   \end{matrix} \right) \quad  A\in \mathfrak{so}(4,\RR),\quad  B^t = \left( \begin{matrix} a & b & 0 & 0 \\
   0 & 0 & c & d
   \end{matrix} \right).$$
  
  The 2-form $\omega=e^1\wedge e^5 +e^2 \wedge e^4 + e^3 \wedge e^6$ gives a symplectic structure.  
  
   $\bullet$  In $\ff_6$ every closed 2-form is represented by a matrix of the form:
  $$\left( \begin{matrix}
  	A & B \\
  	-B^t & 0
  \end{matrix} \right) \quad A\in \mathfrak{so}(3,\RR),\quad  B =\left( \begin{matrix}
  	d & e & f \\
  	g & f+i & h\\
  	i & j & k
  \end{matrix} \right).$$
  
  The 2-form $\omega=e^1\wedge e^6 +2 e^2 \wedge e^5 + e^3 \wedge e^4$ gives a symplectic structure.  
  
   $\bullet$ In $\kk_6$ every closed 2-form is represented by a $6 \times 6$ skew-symmetric  matrix of the form:
   $$\left( \begin{matrix}
   A & B \\
   -B^t & 0
   \end{matrix} \right) \quad  A\in \mathfrak{so}(4,\RR),\quad  B^t =\left( \begin{matrix}
   0 & 0 & g & h \\
   g & h & i & j
   \end{matrix} \right).$$
   
   The 2-form $\omega=e^1\wedge e^6 +e^2 \wedge e^4 + e^3 \wedge e^5$ gives a symplectic structure.  
   
   $\bullet$  In $\hh_1(\CC)$ every closed 2-form is represented by a matrix of the form:
   $$\left( \begin{matrix}
   A & B \\
   -B^t & 0
   \end{matrix} \right) \quad A\in \mathfrak{so}(4,\RR),\quad  B^t=\left( \begin{matrix}
   g & h & i & j \\
   h & - g & j & -i
   \end{matrix} \right).$$
   
     The 2-form $\omega=e^1\wedge e^6 +e^2 \wedge e^5 + e^3 \wedge e^4$ gives a symplectic structure.  
     
   The results of the previous paragraphs enables the proof of the following result. 
   
    \begin{thm} Every 2-step nilpotent Lie group of dimension $2n$ with $n\leq 3$ admits a left-invariant symplectic structure if and only if it admits a  closed left-invariant 2-form of type II, with exception of the trivial extension of the Heisenberg Lie group of dimension five. 
     \end{thm}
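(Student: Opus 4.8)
The plan is to read the equivalence as two separate implications and to treat the reverse one by running through the classification recalled above. The forward implication, that a symplectic structure forces a closed $2$-form of type II, is already established in full generality by the Lemma above: a Lorentz force of type I is never non-degenerate, so a symplectic $\lf$ must have a non-trivial type II part. This needs no case analysis and no dimension restriction, so the whole content of the statement is the reverse implication, together with pinning down the single algebra where it fails.

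First I would invoke the preceding Proposition: every $2$-step nilpotent Lie algebra of even dimension at most six in the list above admits a non-trivial closed $2$-form of type II (dimension two contributes only the abelian case, with no genuine $2$-step structure). Consequently, for the equivalence it is enough to decide, algebra by algebra, whether a symplectic structure exists and to match this against the (always affirmative) existence of a type II form.

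For every algebra on the list except $\hh_2(\RR)\oplus\RR$ I would simply exhibit the explicit closed $2$-form displayed in the dimension-by-dimension discussion — for instance $e^1\wedge e^3+e^2\wedge e^4+e^5\wedge e^6$ on $\hh_1(\RR)\oplus\RR^3$, $e^1\wedge e^4+e^2\wedge e^5+e^3\wedge e^6$ on $\ggo_5\oplus\RR$, $e^1\wedge e^5+e^2\wedge e^4+e^3\wedge e^6$ on $\hh_1(\RR)\oplus\hh_1(\RR)$, $e^1\wedge e^6+2e^2\wedge e^5+e^3\wedge e^4$ on $\ff_6$, the forms similarly listed for $\kk_6$ and $\hh_1(\CC)$, and for $\hh_1(\RR)\oplus\RR$ a generic matrix of the stated block shape. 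Closedness is immediate from \eqref{clos}, since each is a combination of generators already shown to be closed, and non-degeneracy is the routine check that the associated skew-symmetric matrix in the dual basis is invertible (equivalently, that its Pfaffian does not vanish). Since each of these algebras also carries a type II form, the equivalence holds for all of them.

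The crux, and the unique exception, is $\hh_2(\RR)\oplus\RR$, and the hard part will be to show it admits no symplectic structure at all. Here $\zz=\langle e_5,e_6\rangle$ with $C(\nn)=\langle e_5\rangle$ and $\ker j=\langle e_6\rangle$, so type II forms do exist (they carry $\vv$ into the Euclidean direction $e_6$). To rule out non-degeneracy I would analyze the general closed $\lf=\lf_1+\lf_2$ rather than only the type II ones: Condition (C1) forces $\lf_1(\zz)\subseteq\ker j=\langle e_6\rangle$, which with skew-symmetry gives $\lf_1|_\zz=0$; and Condition (C2), written out with $C_{12}^{5}=C_{34}^{5}=1$ and all remaining structure constants zero, forces the $e_5$-component of $\lf(v)$ to vanish for every $v\in\vv$. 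These are exactly the two features visible in the matrix presentation of the closed forms on $\hh_2(\RR)\oplus\RR$ above, namely the vanishing of the entire row and column of $e_5$. Hence $e_5\in C(\nn)$ lies in the radical of every closed $2$-form, no such form is symplectic, and $\hh_2(\RR)\oplus\RR$ witnesses a closed type II form without a symplectic structure — precisely the exception recorded in the statement. The genuine obstacle is this non-existence, since it requires controlling all closed $2$-forms and showing the commutator line is unavoidably isotropic; everything else reduces to displaying explicit forms and a Pfaffian check.
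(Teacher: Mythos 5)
Your proof is correct and follows essentially the same route as the paper: the forward implication via the general lemma that a Lorentz force of type I is never non-singular, and the reverse implication by running through the classification and exhibiting the explicit symplectic forms from the dimension-by-dimension computation of closed $2$-forms. Your argument that $e_5$ lies in the radical of every closed $2$-form on $\hh_2(\RR)\oplus\RR$ (via (C1), skew-symmetry on the two-dimensional center, and (C2) with $C_{12}^5=C_{34}^5=1$) is a welcome elaboration of the paper's bare assertion that this algebra admits no symplectic structure, but it is the same computation read off from the displayed matrix presentation.
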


   \subsection{The action of the group of orthogonal automorphisms}
   
   As proved in \cite{OS2} there is an action of the group of orthogonal  automorphisms that facilitates to find the solutions: in fact, assume $\gamma$ is a solution of Equation \eqref{magneteq}. Take a orthogonal automorphism $\varphi$, then 
   $$\nabla_{d\varphi \gamma'} d\varphi \gamma'= d\varphi \nabla_{\gamma'} \gamma' = d\varphi F \gamma' = d\varphi F d\varphi^{-1} d\varphi \gamma',$$
   see more details in \cite{OS2}. 
   Thus, the magnetic trajectories for $F$ are related via $d\varphi$  with the magnetic trajectories for $d\varphi F d\varphi^{-1}$. 
  
  Orthogonal automorphisms of the 2-step nilpotent Lie group $N$ are in correspondence with orthogonal automorphisms of the correspondent Lie algebra $\nn$. That is, $\psi: \nn\to \nn$, such that $\la \psi x, \psi y\ra=\la x, y\ra$ for any $x,y\in \nn$.
  
  By using the orthogonal decomposition $\nn=\vv\oplus \zz$ we have that 
  \begin{itemize}
  \item $\psi(\zz)\subseteq \zz$ and $\psi(\vv)\subseteq \vv$,
  \item $\psi=(A,B)$ with $A\in \mathrm{O}(\vv)$ and $B\in \mathrm{O}(\zz)$ satisfy $Aj(Z)A^{-1}=j(BZ)$, for all $Z\in \zz$.
  \end{itemize}
  In particular, the last equality above says that exact magnetic fields are preserved by this action. 
  
  It is not hard to see  that this action preserves the type of the magnetic field. That is, if $\lf$ is of type I (resp. II), then $\psi  \circ \lf \circ \psi^{-1}$ is of type I (resp. II), for any $\psi:\nn \to \nn$ orthogonal automorphism.
  
  \smallskip
   
  \begin{rem} In \cite{OS2} the authors considered a more general action to compute solutions. In fact given a magnetic trajectory $\gamma: I \to M$ solution of the magnetic equation \eqref{magneteq} for a Lorentz force $F$, then the curve $\phi\circ \gamma(\nu s)$ is solution of the magnetic equation for the Lorentz force $\nu  d\phi \lf d\phi^{-1}$, for $\nu \in \RR-\{0\}$ and $\phi$ a isometry of $M$.  
  \end{rem}
   
  \begin{example} In the Heisenberg Lie algebra of dimension three the group of orthogonal automorphisms $Auto(\hh_1(\RR))$ consists of maps $\psi:\nn \to \nn$, where $\psi(v+z)=Av+ det(A)z$, for $v\in \vv, z\in \zz$, $A\in \mathrm{O}(2)$. 
   
   Under this action any closed 2-form
  \begin{itemize}
  \item of type I is equivalent to  $j(e_3)$;
  \item  of type II it is equivalent to 
        $$\left( \begin{matrix}
        0 & B\\
        -B^t & 0 
        \end{matrix} \right)   \quad  B^t=\left( \begin{matrix} r & 0 \end{matrix} \right) \quad r\in \RR^+.$$
  \end{itemize} 
   The action of $Auto(\hh_1(\RR))$ preserves the norm $\Vert v \Vert$ for any $v\in\vv$. Thus $r=\Vert v \Vert$.    
    \end{example}
     
  The natural action is that  of the automorphism group on  the second cohomology group $H^2(\nn, \RR)$ which enables to classify the extensions of $\nn$. See for instance \cite{MJ}.

  \subsection{Lie algebras admitting closed 2-forms of type II but no symplectic structure.}\label{sec.example} Now we present examples of indecomposable Lie algebras admitting a closed 2-form of type II but no symplectic structure. 
  
   There is a family of 2-step nilpotent Lie algebras which are constructed from graphs.   
  Let $G$ be a directed graph with at least one edge. Denote the   vertices of $G$ by $S = \{X_1, . . . ,X_m\}$ and its edges by $E = \{Z_1, . . . ,Z_q\}$.
    The Lie algebra $L(G)$ is the vector space direct sum $\vv \oplus \zz$, where we let $E$
    be a basis over $\RR$ for $\zz$ and $S$ be a basis over $\RR$ for $\vv$. Define the bracket relations
    among elements of $S$ according to adjacency rules:
    \begin{itemize}
    \item if $Z_k$ is a directed edge from vertex $X_i$ to vertex $X_l$ then define the skew-symmetric
      bracket $[X_i,X_l] = Z_k$.
   \item If there is no edge between two vertices, then define the bracket of those
      two elements in $S$ to be zero.
    \end{itemize}

    Extend the bracket relation to all of $\vv$ by using bilinearity of the bracket. The existence problem of symplectic structures in this family of 2-step nilpotent Lie algebras was solved by   Pouseele and Tirao  in \cite{PT}:  
  
  \smallskip
  
 \begin{thm} \cite{PT}.Let $G$ be a graph. Then the Lie algebra $L(G)$ associated with $G$ is symplectic if and only if $|V|+ |E|$ is even, and, in
  each connected component of $G$, the number of edges does not exceed the number of vertices.
  \end{thm}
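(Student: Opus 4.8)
The plan is to fix the metric for which $S\cup E$ is orthonormal, so that $\vv=\operatorname{span}(S)$, the commutator $C(\nn)=\operatorname{span}(E)$ is the edge-space, and $\ker j$ is the span of the isolated vertices of $G$. Relative to the splitting of $\nn$ into the vertex-space $\vv$ and the edge-space $\zz=\operatorname{span}(E)$, any left-invariant $2$-form is encoded by a skew map
$$\lf=\begin{pmatrix} A & B\\ -B^{t} & C\end{pmatrix},\qquad A\in\sso(\vv),\quad B:\zz\to\vv,\quad C\in\sso(\zz).$$
Since every bracket $[X_i,X_j]$ lands in the edge-space, the vertex--vertex block $A$ never enters a closedness equation and is therefore \emph{free}; testing $d\omega$ on two vertices and one edge forces the edge--edge block $C=0$; and testing on three vertices reduces the closedness of $\omega$ exactly to the cocycle condition (C2) on $B$, namely $\sum_s(C_{ij}^{s}b_{sk}+C_{jk}^{s}b_{si}+C_{ki}^{s}b_{sj})=0$. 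Finally I will use that such a block matrix is non-degenerate precisely when $B$ is injective and $A$ restricts to a non-degenerate form on $(\Rg B)^{\perp}$.

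For the \emph{necessity} direction, evenness of $\dim L(G)=|V|+|E|$ is automatic. Next, exactly as in the proof of Lemma \ref{closedF1}, the commutator $C(\nn)=\operatorname{span}(E)$ is isotropic for every closed $\omega$. Writing an edge as a bracket $Z_k=[X_a,X_b]$ inside its own component and expanding $d\omega(X_a,X_b,X_c)$, the two cross terms vanish whenever $X_c$ lies in another component; hence $\omega(Z_k,\cdot)$ is supported on the vertices of the component of $Z_k$ and vanishes on all edges and all isolated vertices. Thus within a fixed component the map $Z\mapsto\omega(Z,\cdot)\big|_{\vv_i}$ sends the $|E_i|$-dimensional edge-space into the $|V_i|$-dimensional dual $\vv_i^{*}$; if $|E_i|>|V_i|$ it has a kernel, producing an edge in the radical of $\omega$ and contradicting non-degeneracy. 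This yields $|E_i|\le|V_i|$ in each component. (Note this is genuinely finer than the global isotropic bound, which would only give $|E|\le|V|$.)

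For \emph{sufficiency} I will exploit the combinatorial meaning of the hypothesis: $|E_i|\le|V_i|$ in every component says exactly that $G$ is a \emph{pseudoforest}, equivalently (by Hall's marriage theorem, or by orienting each component so that every in-degree is at most one) there is an injection $\sigma$ assigning to each edge one of its two endpoints, with distinct edges receiving distinct vertices. I then define $B$ by $B(Z)=v_{\sigma(Z)}$ and set all other couplings to zero. This $B$ is injective, and (C2) holds \emph{trivially}: in every triple $\{i,j,k\}$ the only possibly nonzero terms $\omega(Z_{ij},X_k)$ require $X_k=v_{\sigma(Z_{ij})}\in\{X_i,X_j\}$, which is excluded for a genuine triple, so each summand already vanishes. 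Finally $\dim(\Rg B)^{\perp}=|V|-|E|$ is even by the parity hypothesis, so I may choose the free block $A$ to be a standard symplectic form on $(\Rg B)^{\perp}$ (and arbitrary elsewhere); by the non-degeneracy criterion above, the resulting closed $2$-form is symplectic.

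I expect the main obstacle to be the sufficiency step, and specifically the two points on which it rests: first, the passage from the numerical hypothesis $|E_i|\le|V_i|$ to the existence of the incidence-injection $\sigma$ (the pseudoforest/Hall argument, where one must verify $|N(F)|\ge|F|$ for every set $F$ of edges by a connectivity count inside each component), and second, the verification that this ``diagonal'' assignment makes (C2) hold on the nose while keeping $B$ injective. The remaining bookkeeping---the isotropy of $C(\nn)$, the vanishing of cross-component pairings, and the block non-degeneracy criterion linking the parity of $|V|-|E|$ to the solvability of the leftover vertex-block---is routine once these two combinatorial facts are in place.
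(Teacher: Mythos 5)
The paper itself offers no proof of this statement---it is quoted verbatim from Pouseele--Tirao \cite{PT} as background---so there is no in-paper argument to compare against; judged on its own merits, your proof is correct, and it fits naturally into the paper's (C1)/(C2) framework. The individual steps all check out. Your block non-degeneracy criterion is right: with $C=0$, a pair $(v,z)$ lies in the radical iff $B^{t}v=0$ and $Av+Bz=0$, i.e.\ iff $v\in(\Rg B)^{\perp}$ and $Av\in\Rg B$, which is exactly degeneracy of the $A$-form on $(\Rg B)^{\perp}$ together with $\ker B\neq 0$. Since every bracket of $L(G)$ lands in the edge-space and each edge is itself a bracket, testing $d\omega$ on two vertices and an edge does force $C=0$, and the three-vertex test is precisely (C2), with $A$ genuinely free. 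Your necessity argument is the componentwise refinement of the radical argument in Lemma \ref{closedF1}: cross-component terms $[X_b,X_c]$, $[X_c,X_a]$ vanish, so $\omega(Z_k,\cdot)$ restricted to $\operatorname{span}(E_i)$ maps into $\vv_i^{*}$, and $|E_i|>|V_i|$ forces a central element of the radical; as you note, the global isotropy bound alone would only give $|E|\le|V|$, so the per-component step is essential. For sufficiency, Hall's condition does hold: each component with $|E_i|\le|V_i|$ has cyclomatic number at most one, subgraphs cannot gain cycles, so $|N(F)|\ge|F|$ for every edge set $F$ (equivalently, the in-degree-at-most-one orientation of a pseudoforest), and the resulting endpoint assignment kills every summand of (C2) individually because $\sigma(Z_{ij})\in\{i,j\}$ excludes the third vertex of a genuine triple; the parity hypothesis then supplies the symplectic $A$-block on the $(|V|-|E|)$-dimensional complement. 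It is reassuring that on $K_3$ your recipe produces $e^{14}+e^{26}+e^{35}$ on $\ff_6$, which satisfies the constraint $a_{25}=a_{16}+a_{34}$ of the paper's table trivially, whereas the paper's displayed form $e^{16}+2e^{25}+e^{34}$ pairs edges with opposite vertices and must solve that constraint non-trivially---your matching choice is what makes closedness vacuous. One minor imprecision: when $G$ has isolated vertices, $\operatorname{span}(S)$ is not $\zz^{\perp}$ for the center of $L(G)$ (isolated vertices are central), so your splitting is not literally the paper's decomposition \eqref{decomp2}; this is harmless, since your argument only uses that all brackets land in $\operatorname{span}(E)$, never that the vertex-space complements the center, but it would be worth saying explicitly.
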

  
  \smallskip
  
  Consider the complete graph $K_4$: it has four vertices $V_1, V_2, V_3, V_4$ and six edges $Z_1, Z_2, Z_3$, $Z_4,Z_5,Z_6$, giving rise to a Lie algebra of dimension ten with non-trivial Lie brackets
  $$[V_1, V_2]=Z_{1},\, [V_1,V_3]=Z_{2},\, ,\, [V_1,V_4]=Z_{3},\, [V_2,V_3]=Z_{4},\, [V_2,V_4]=Z_{5},\, [V_3,V_4]=Z_{6}.$$
  Since $|E|=6>4=|V|$ the corresponding Lie algebra does not admit a symplectic structure. 
  
  However it admits a 2-form of type II: one has to solve four linear equations for the coordinates $a_{ij}$ such that $F(V_i)=\sum_{j=1}^6 a_{ji} Z_{j}$:
  
 \begin{itemize}
 \item $\la [V_1,V_2], F(V_3)\ra + \la [V_2,V_3], F(V_1)\ra + \la [V_3,V_1], F(V_2)\ra =0$ is equivalent to
 
 $a_{13} + a_{41} - a_{22} =0$, 
 
 \item  $\la [V_1,V_2], F(V_4)\ra + \la [V_2,V_4], F(V_1)\ra + \la [V_4,V_1], F(V_2)\ra =0$ is equivalent to
  
  $a_{14} + a_{51} - a_{32} =0$,
  
   \item  $\la [V_1,V_3], F(V_4)\ra + \la [V_3,V_4], F(V_1)\ra + \la [V_4,V_1], F(V_3)\ra =0$ is equivalent to
      
      $a_{24} + a_{61} - a_{33} =0$.
      
  \item  $\la [V_2,V_3], F(V_4)\ra + \la [V_3,V_4], F(V_2)\ra + \la [V_4,V_2], F(V_3)\ra =0$ is equivalent to
   
   $a_{44} + a_{62} - a_{53} =0$.

 \end{itemize}
  This is a linear homogeneous system with 24 variables and 4 equations. That is, there always exists non-trivial solutions. Notice that the indices at the beginning of each equations show that the corresponding column has a 1 only in this position. Thus, the system has column rank equals four.  Moreover the result of Pouseele and Tirao says:
  
  \smallskip
  
  {\it Among complete graphs $K_n$, the only one giving rise to a  symplectic Lie algebra is $K_3$}.

  \smallskip
  
  The ideas of the previous  example for the complete graph $K_4$ can be extended to other complete graphs $K_n$ to get closed 2-forms of type II. One has to solve $\frac{n(n-1)(n-2)}{3!}$ equations for $n \frac{n(n-1)}2$ coefficients $a_{ij}^k$, for $n\geq 3$, where $i<j$. In fact the corresponding matrix has column rank equals $\frac{n(n-1)(n-2)}{3!}$, which is verified at the column of $a_{ij}^k$ for all possible $i,j,k$ with $i<j<k$, and $1\leq i$.
  
  The associated Lie algebra for $K_n$ is the free 2-step nilpotent Lie algebra in $n$ generators.

  \begin{rem} The Lie algebra induced from the graph $K_3$ is the Lie algebra $\mathfrak f_6$. It admits closed 2-forms of type II. Moreover there exists symplectic forms induced from 2-forms of type II. In fact, take for instance the 2-form induced from a matrix as in page 15 with $d=j=h=1$ and the other coefficients equal zero, is symplectic. 
  	\end{rem}

\end{document}